\def\acmversion{0} 
\def\draft{1}
\declaretheoremstyle[bodyfont=\it,qed=\qedsymbol]{noproofstyle}
\numberwithin{equation}{section}
\declaretheorem[name=Observation,numbered=no]{observation*}
\declaretheorem[numberlike=equation]{theorem}
\declaretheorem[name=Theorem,numbered=no]{theorem*}
\declaretheorem[numberlike=equation]{lemma}
\declaretheorem[name=Lemma,numbered=no]{lemma*}
\declaretheorem[name=Corollary,numbered=no]{corollary*}
\declaretheorem[numberlike=equation]{proposition}
\declaretheorem[name=Proposition,numbered=no]{proposition*}
\declaretheorem[name=Claim,numbered=no]{claim*}
\declaretheorem[numberlike=equation]{conjecture}
\declaretheorem[name=Conjecture,numbered=no]{conjecture*}
\declaretheorem[name=Question,numbered=no]{question*}
\declaretheoremstyle[bodyfont=\it]{defstyle} 
\declaretheorem[numberlike=equation,style=defstyle]{definition}
\declaretheorem[unnumbered,name=Definition,style=defstyle]{definition*}
\declaretheorem[unnumbered,name=Example,style=defstyle]{example*}
\declaretheorem[unnumbered,name=Notation=defstyle]{notation*}
\declaretheorem[unnumbered,name=Construction,style=defstyle]{construction*}
\declaretheoremstyle[]{rmkstyle}
\newcommand{\BN}{\mathbb N}
\newcommand{\BR}{\mathbb R}
\newcommand{\CA}{\mathcal A}
\newcommand{\CB}{\mathcal B}
\newcommand{\CH}{\mathcal H}
\newcommand{\CP}{\mathcal P}
\newcommand{\CR}{\mathcal R}
\newcommand{\CS}{\mathcal S}
\newcommand{\CU}{\mathcal U}
\newcommand{\CX}{\mathcal X}
\newcommand{\CY}{\mathcal Y}
\renewcommand{\tilde}{\widetilde}
\newcommand{\Gr}{\operatorname{G}}
\newcommand{\I}{\operatorname{I}}
\newcommand{\rs}{\operatorname{rs}}
\newcommand{\rank}{\operatorname{rank}}
\newcommand{\CC}{\operatorname{CC}}
\newcommand{\polylog}{\operatorname{polylog}}
\newcommand{\Mat}{\mathsf{Mat}}
\newcommand{\Con}{\mathsf{Con}}
\newcommand{\Palvolgyi}{P\'alv\"olgyi}
\newcommand{\samples}{\sim}
\newcommand{\E}{\mathop{\mathbb{E}}}
\newcommand{\mnote}[1]{\ifnum\draft=1 {\color{red} [\textbf{MS}: #1]}\fi}
\newcommand{\snote}[1]{\ifnum\draft=1 {\color{orange} [\textbf{NS:} #1]}\fi}
\title{Point-hyperplane incidence geometry and the log-rank conjecture}
\author{Noah Singer\thanks{Harvard College, Harvard University. \href{mailto:noahsinger@college.harvard.edu}{noahsinger@college.harvard.edu}. Supported by the Herchel Smith-Harvard Undergraduate Science Research Program.} \and Madhu Sudan\thanks{School of Engineering and Applied Sciences, Harvard University. \href{mailto:madhu@cs.harvard.edu}{madhu@cs.harvard.edu}. Supported in part by a Simons Investigator Award and NSF Award CCF 1715187.}}
  \providecommand\BibTeX{{%
    \normalfont B\kern-0.5em{\scshape i\kern-0.25em b}\kern-0.8em\TeX}}}
\begin{document}

\ifnum\acmversion=1
\title{Point-hyperplane incidence geometry and the log-rank conjecture}

\author{Noah Singer}
\email{noahsinger@college.harvard.edu}
\orcid{0000-0002-0076-521X}
\affiliation{%
  \institution{Harvard University}
  \city{Cambridge}
  \state{Massachusetts}
  \country{USA}
  \postcode{02138}
}

\author{Madhu Sudan}
\email{madhu@cs.harvard.edu}
\orcid{0000-0003-3718-6489}
\affiliation{%
  \institution{Harvard University}
  \city{Cambridge}
  \state{Massachusetts}
  \country{USA}
  \postcode{02138}
}


\else 

\maketitle 

\fi 

\begin{abstract}
We study the log-rank conjecture from the perspective of point-hyperplane incidence geometry. We formulate the following conjecture: Given a point set in $\BR^d$ that is covered by constant-sized sets of parallel hyperplanes, there exists an affine subspace that accounts for a large (i.e., $2^{-{\polylog(d)}}$) fraction of the incidences, in the sense of containing a large fraction of the points and being contained in a large fraction of the hyperplanes. In other words, the point-hyperplane incidence graph for such configurations has a large complete bipartite subgraph. Alternatively, our conjecture may be interpreted linear-algebraically as follows: Any rank-$d$ matrix containing at most $O(1)$ distinct entries in each column contains a submatrix of fractional size $2^{-{\polylog(d)}}$, in which each column is constant. We prove that our conjecture is equivalent to the log-rank conjecture; the crucial ingredient of this proof is a reduction from bounds for parallel $k$-partitions to bounds for parallel $(k-1)$-partitions. We also introduce an (apparent) strengthening of the conjecture, which relaxes the requirements that the sets of hyperplanes be parallel.

Motivated by the connections above, we revisit well-studied questions in point-hyperplane incidence geometry \emph{without} structural assumptions (i.e., the existence of partitions). We give an elementary argument for the existence of complete bipartite subgraphs of density $\Omega(\epsilon^{2d}/d)$ in any $d$-dimensional configuration with incidence density $\epsilon$, qualitatively matching previous results proved using sophisticated geometric techniques. We also improve an upper-bound construction of Apfelbaum and Sharir~\cite{AS07}, yielding a configuration whose complete bipartite subgraphs are exponentially small and whose incidence density is $\Omega(1/\sqrt d)$. Finally, we discuss various constructions (due to others) of products of Boolean matrices which yield configurations with incidence density $\Omega(1)$ and complete bipartite subgraph density $2^{-\Omega(\sqrt d)}$, and pose several questions for this special case in the alternative language of extremal set combinatorics.

Our framework and results may help shed light on the difficulty of improving Lovett's $\tilde{O}(\sqrt{\rank(f)})$ bound~\cite{Lov16} for the log-rank conjecture. In particular, any improvement on this bound would imply the first complete bipartite subgraph size bounds for parallel $3$-partitioned configurations which beat our generic bounds for unstructured configurations.
\end{abstract}

\ifnum\acmversion=1
\begin{CCSXML}
<ccs2012>
<concept>
<concept_id>10003752.10010061</concept_id>
<concept_desc>Theory of computation~Randomness, geometry and discrete structures</concept_desc>
<concept_significance>500</concept_significance>
</concept>
<concept>
<concept_id>10003752.10003777.10003780</concept_id>
<concept_desc>Theory of computation~Communication complexity</concept_desc>
<concept_significance>500</concept_significance>
</concept>
<concept>
<concept_id>10002950.10003624.10003625</concept_id>
<concept_desc>Mathematics of computing~Combinatorics</concept_desc>
<concept_significance>300</concept_significance>
</concept>
<concept>
<concept_id>10002950.10003624.10003633</concept_id>
<concept_desc>Mathematics of computing~Graph theory</concept_desc>
<concept_significance>500</concept_significance>
</concept>
</ccs2012>
\end{CCSXML}

\ccsdesc[500]{Theory of computation~Randomness, geometry and discrete structures}
\ccsdesc[500]{Theory of computation~Communication complexity}
\ccsdesc[300]{Mathematics of computing~Combinatorics}
\ccsdesc[500]{Mathematics of computing~Graph theory}

\keywords{log-rank conjecture, incidence geometry, low-rank matrices, extremal set theory}
\fi 

\maketitle

\section{Introduction}

In this work we present several linear-algebraic, incidence-geometric, and set-theoretic conjectures which are connected to the ``log-rank conjecture'' in communication complexity. We also describe some mild progress on the incidence-geometric questions. We start with some background on communication complexity and incidence geometry.

\subsection{Motivation and background}

\subsubsection*{Notation}

$f \leq \tilde{O}(g)$ denotes ``$f \leq O(g \cdot \polylog(g))$.'' $[n]$ denotes the set of integers $\{1,\ldots,n\}$. All logarithms are base 2. A \emph{submatrix of} or \emph{rectangle in} a matrix $M \in \BR^{\CX \times \CY}$ is given by two subsets $\CA \subseteq \CX$ and $\CB \subseteq \CY$ and denoted $M|_{\CA \times \CB}$; we use these terms interchangeably. In real space $\BR^d$, a \emph{$j$-flat} is a $j$-dimensional affine subspace. Hence, a point is a 0-flat and a hyperplane is a $(d-1)$-flat. $2^{\CX}$ denotes the powerset of a set $\CX$.

\subsubsection{Communication complexity and the log-rank conjecture}\label{sec:motivation}
The \emph{(deterministic) communication complexity} of a (two-party) function $f : \CX \times \CY \to \{0,1\}$, as defined by Yao~\cite{Yao79}, measures how much communication is needed for two cooperating parties, one knowing $x \in \CX$ and the other knowing $y \in \CY$, to jointly determine $f(x,y)$. The (deterministic) communication complexity $\CC_{det}(f)$ is the minimum over all communication protocols that compute $f(x,y)$ of the maximum communication over all pairs $(x,y)\in \CX\times\CY$. 

Every function $f : \CX \times \CY \to \{0,1\}$ corresponds naturally to a Boolean matrix $M_f$, with rows indexed by $\CX$ and columns by $\CY$, where $(M_f)_{x,y}=f(x,y)$. Given a function $f : \CX \times \CY \to \{0,1\}$, we can define its \emph{rank}, denoted $\rank(f)$, as the rank of $M_f$  over $\BR$, which is a linear-algebraic measure of $f$'s complexity. This leads to a natural question: How is $\rank(f)$ connected to $\CC_{det}(f)$?

A \emph{monochromatic rectangle} for a function $f : \CX \times \CY \to \{0,1\}$ is a pair $\CA \subseteq \CX, \CB \subseteq \CY$ such that $f(a,b)$ is constant over all $(a,b) \in \CA \times \CB$. A $c$-bit communication protocol for $f$ partitions the space $\CX \times \CY$ into a disjoint union of at most $2^c$ monochromatic rectangles. Since monochromatic rectangles for $f$ correspond to rank-1 submatrices of $M_f$, $\log(\rank(f)) \leq \CC_{det}(f)$~\cite{MS82}. The \emph{log-rank conjecture} of Lov\'asz and Saks~\cite{LS88} posits a matching upper bound up to a polynomial factor; that is:

\begin{conjecture}[Log-rank conjecture~\cite{LS88}]\label{conj:log-rank}
	For every function $f : \CX \times \CY \to \{0,1\}$, \[ \CC_{det}(f) \leq \polylog(\rank(f)). \]
\end{conjecture}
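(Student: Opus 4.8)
The statement to be proved here is the log-rank conjecture itself, which of course remains open; what follows is an honest account of the plan of attack one would attempt — including this paper's route through incidence geometry — together with the point at which it is known to break down.

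The plan is to reduce the conjecture to a purely combinatorial ``large monochromatic rectangle'' statement and then attack that. First I would invoke the standard recursive reduction: it suffices to show that every Boolean matrix $M = M_f$ of rank $r$ contains a monochromatic rectangle $\CA \times \CB$ with $|\CA|\cdot|\CB| \geq 2^{-\polylog(r)}\cdot|\CX|\cdot|\CY|$. Given such a lemma, one builds a protocol recursively: the parties agree in advance on the promised rectangle, each announces whether their input lies on its side of it (two bits), and either both are inside — in which case $f$ is determined — or the problem restricts to a submatrix whose dimensions have shrunk by a constant factor on at least one side while its rank has not increased. Tracking the potential $\log(\text{\# rows}) + \log(\text{\# columns})$ together with $\rank$, the recursion terminates in $\polylog(r)$ rounds of $O(1)$ bits each, giving $\CC_{det}(f) \leq \polylog(r)$. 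So the entire difficulty is concentrated in producing one large monochromatic rectangle in an arbitrary rank-$r$ Boolean matrix.

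Next I would try to produce that rectangle via this paper's translation into incidence geometry. View the columns of $M$ as a point set in $\BR^d$ with $d = \rank(M)$; each column, taking at most two distinct values, is then cut out by (at most) two parallel affine hyperplanes, so the whole configuration is covered by parallel $2$-partitions, and a monochromatic rectangle is exactly a complete bipartite subgraph of the point–hyperplane incidence graph, i.e.\ an affine flat containing a $2^{-\polylog(d)}$ fraction of the points and contained in a $2^{-\polylog(d)}$ fraction of the hyperplanes. Using the reduction from parallel $k$-partitions to parallel $(k-1)$-partitions (the key technical ingredient established later in the paper), one reduces to configurations covered by very few parallel hyperplane families, where it is natural to iterate a ``richest partition / pigeonhole'' step: repeatedly pass to the densest class of one of the partitions, which restricts to an affine hyperplane and hence drops the ambient dimension, while keeping a $2^{-\polylog}$ fraction of the incidences at each of the $\polylog(d)$ rounds.

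The hard part — and the reason the conjecture is still open — is precisely this last step: controlling the multiplicative density loss as the iteration proceeds. The best bound obtainable along these lines is Lovett's $\tilde{O}(\sqrt{\rank(f)})$ bound, which corresponds to tolerating only a \emph{polynomial} (not $2^{-\polylog}$) loss per round; and, as the abstract observes, even improving on $\sqrt{\rank}$ in the parallel $3$-partition case would already be a new result. So I would expect this plan to re-derive the $\sqrt{\rank}$-type bounds and then to get stuck exactly where the whole area is stuck: there is no known mechanism forcing a rank-$d$ Boolean matrix to contain a monochromatic rectangle of density $2^{-\polylog(d)}$ rather than merely $2^{-\tilde{O}(\sqrt d)}$, and the upper-bound constructions discussed in the paper (products of Boolean matrices with incidence density $\Omega(1)$ yet only $2^{-\Omega(\sqrt d)}$ complete bipartite subgraph density) show that any such mechanism must exploit the full force of the rank/partition hypothesis in a way we do not yet understand.
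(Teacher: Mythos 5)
The statement you were asked to prove is a conjecture, not a theorem; the paper neither proves it nor claims to, and you are right to say so explicitly rather than manufacture a spurious argument. Your reduction to the ``one large monochromatic rectangle'' problem is the correct framing (it is essentially \cref{thm:NW95}), and your identification of Lovett's $\tilde O(\sqrt{\rank})$ bound (\cref{thm:lovett-sqrt-rank}) as the current frontier is accurate.

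There is, however, a real confusion in your sketch of how the paper's machinery would be deployed. The reduction in \cref{thm:k-reduction} goes in the \emph{opposite} direction from what you describe: it shows that a $2^{-\polylog(d)}$ monochromatic-rectangle bound for parallel $(k-1)$-partitioned configurations (equivalently, $(k-1)$-listable matrices) yields one for parallel $k$-partitioned configurations, with the loss compounding at each step. The base case $k=2$ is exactly the Boolean, log-rank case. So the reduction lets one \emph{use} the log-rank conjecture to attack more general configurations; it does not reduce the log-rank conjecture to anything simpler. Moreover, it operates on the \emph{size} $k$ of each parallel block, not on the \emph{number} of blocks, which remains of order $m$ throughout — so the phrase ``one reduces to configurations covered by very few parallel hyperplane families'' misreads what the reduction does. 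Finally, the ``pass to the richest class, drop a dimension'' iteration you sketch is closer in spirit to the elementary argument behind \cref{thm:probabilistic-lower-bound}, which only yields a $2^{-\Omega(d)}$-type bound; Lovett's $2^{-\tilde O(\sqrt d)}$ bound comes from a qualitatively different mechanism (discrepancy plus hyperplane rounding) that exploits the Boolean gap rather than a dimension-dropping recursion.
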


This conjecture is a central and notorious open question in communication complexity. Currently, the best known bound for arbitrary $f$ is due to Lovett~\cite{Lov16}, who proved the following:

\begin{theorem}[{\cite{Lov16}}]\label{thm:lovett-sqrt-rank}
For every function $f : \CX \times \CY \to \{0,1\}$, \[ \CC_{det}(f) \leq O\left(\sqrt{\rank(f)} \log(\rank(f))\right). \]
\end{theorem}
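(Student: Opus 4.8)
The plan is to split the theorem into a generic ``large monochromatic rectangle $\Rightarrow$ protocol'' step and a combinatorial ``large monochromatic rectangle'' lemma, and to concentrate essentially all the work on the latter. For the generic step I would invoke the standard connection between deterministic communication and monochromatic rectangles: if every Boolean matrix of rank at most $r$ contains a monochromatic submatrix occupying at least a $2^{-t(r)}$ fraction of its entries, then $\CC_{det}(f)\le O\!\left(t(\rank(f))\cdot\log\rank(f)\right)$, the extra $\log\rank$ factor arising from recursing on the complementary set of rows or columns after each rectangle is peeled off (an operation which never increases the rank). Taking $t(r)=O(\sqrt r)$ gives exactly the claimed bound, so it remains to prove: \emph{every rank-$r$ Boolean matrix $M$ contains a monochromatic submatrix of relative size $2^{-O(\sqrt r)}$.}

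Two trivial reductions sharpen the target. First, since $\rank(J-M)\le\rank(M)+1$ (with $J$ the all-ones matrix), either $M$ or $J-M$ has at least half of its entries equal to $0$, so it suffices to find a large \emph{all-zero} submatrix. Second, fixing a rank factorization and writing $x_1,\dots,x_m$ and $y_1,\dots,y_n$ for the resulting vectors in $\BR^r$ with $M_{ij}=\langle x_i,y_j\rangle\in\{0,1\}$, an all-zero submatrix becomes simply a set of $x_i$'s and a set of $y_j$'s orthogonal across the two sides, and ``$M$ mostly zero'' says that a uniformly random pair $(x_i,y_j)$ is orthogonal with constant probability. Here I would choose the factorization to keep the geometry under control — this is the natural place to use the elementary inequality $\gamma_2(M)\le\sqrt{\rank(M)}$ for $0/1$-matrices.

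The heart of the proof is to turn this abundance of orthogonality, together with the fact that everything sits in only $r$ dimensions, into an all-zero submatrix of relative size $2^{-O(\sqrt r)}$. The approach I would take is a bucketing argument: coarsen the vectors $x_i$ and $y_j$ — by rounding their coordinates to a fixed-granularity grid, or by projecting onto a carefully chosen $O(\sqrt r)$-dimensional subspace and then rounding — so that they fall into only $2^{O(\sqrt r)}$ classes, then use pigeonhole to extract $\ge m/2^{O(\sqrt r)}$ rows and $\ge n/2^{O(\sqrt r)}$ columns whose coarsened vectors are mutually ``compatible,'' forcing every inner product on the resulting submatrix to vanish. The calibration of the coarsening is, I expect, the main obstacle: the number of classes must be kept at $2^{O(\sqrt r)}$ while the error introduced within a class must stay below the $\tfrac12$ needed to tell $0$ from $1$. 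A naive trade-off only yields $2^{O(r)}$ classes, i.e.\ the trivial regime; extracting the square root is exactly where one must exploit the geometry of $\BR^r$ and the norm control from $\gamma_2\le\sqrt r$, and any improvement to this exponent is essentially what it would take to improve the theorem.

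Finally, the bucketing step will in general deliver only an \emph{almost} all-zero submatrix, with a small density of stray $1$-entries. I would clean this up by viewing the $1$-entries as a sparse bipartite graph and greedily deleting its few rows and columns of high degree, which leaves a genuinely all-zero submatrix of comparable size (costing one further constant factor in the exponent). Undoing the complementation and feeding the resulting monochromatic rectangle of relative size $2^{-O(\sqrt r)}$ into the protocol construction of the first paragraph then completes the argument.
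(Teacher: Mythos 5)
This theorem is quoted from Lovett~\cite{Lov16} and used as a black box; the paper does not supply a proof of it, so there is no internal proof to compare against. Evaluated on its own merits, your proposal has the right outer structure — reduce via Nisan--Wigderson to the existence of a large monochromatic rectangle, complement so that it suffices to find a large all-zero rectangle, and work with a low-$\gamma_2$ factorization — but the proposal stalls exactly at the step that carries all of the content, and I think the approach you propose for that step would not succeed.

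Two concrete issues. First, you set an intermediate target that is stronger than what is known. You aim for a monochromatic rectangle of density $2^{-O(\sqrt r)}$, and you plug this into a lossy version of the Nisan--Wigderson reduction that multiplies by an extra $\log r$. But Lovett's actual rectangle lemma only gives density $2^{-O(\sqrt r \log r)}$; it is an open question whether $2^{-O(\sqrt r)}$ is achievable (this would already improve the theorem to $O(\sqrt r)$ when fed into the sharper, geometrically-summed form of NW stated as \cref{thm:NW95}). So the statement you declare ``it remains to prove'' is not a lemma anyone knows how to prove, and the final $\log$ factor in the theorem comes from the rectangle lemma, not from the recursion.

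Second, and more seriously, the ``bucketing/rounding'' argument for the rectangle lemma is not a proof and there is no apparent way to make it one. Coarsening the $x_i$ and $y_j$ to a grid fine enough to preserve the $0$ vs.\ $1$ distinction (i.e., resolve inner products to within $\tfrac12$, against vectors of $\ell_2$ norm up to $\gamma_2 \approx \sqrt r$) forces $2^{\Omega(r)}$ grid cells; projecting to an $O(\sqrt r)$-dimensional subspace before rounding destroys orthogonality information rather than preserving it. You correctly identify this calibration as ``the main obstacle,'' but you do not resolve it, and the square-root saving does not come from any rounding scheme. Lovett's actual argument is of a different character: it uses the discrepancy consequences of $\gamma_2(M) \le \sqrt{\rank(M)}$ (via Linial--Shraibman) to locate a large rectangle with a small constant bias toward one color, and then runs an iterative amplification over roughly $\log r$ rounds, in each round shrinking the rectangle by a factor of about $2^{-O(\sqrt r)}$ while multiplicatively boosting the bias, until a sparse-cleanup step (which you do correctly anticipate at the end) removes the remaining stray entries. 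The bias-amplification recursion is the missing idea, and it is not a refinement of bucketing but a replacement for it.
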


The log-rank conjecture asserts that every low-rank Boolean matrix can be partitioned into a small number of monochromatic rectangles. An obviously necessary condition for this is the presence of a large monochromatic rectangle. A result
due to Nisan and Wigderson~\cite{NW95} shows that this is in fact also a sufficient condition. Specifically, define the \emph{size} of a rectangle $(\CA,\CB)$ as $|\CA||\CB|$. Then:

\begin{theorem}[\cite{NW95}, as articulated in~\cite{Lov16}]\label{thm:NW95}
	Suppose that there exists some function $\gamma : \BN \to \BN$ such that the following is true: For every Boolean function $f : \CX \times \CY \to \{0,1\}$, $f$ contains a monochromatic rectangle of size at least $|\CX||\CY|\cdot2^{-\gamma(\rank(f))}$. Then for every Boolean function $f : \CX \times \CY \to \{0,1\}$, \[ \CC_{det}(f) \leq O\left(\log^2(\rank(f)) + \sum_{i=0}^{\log(\rank(f))} \gamma\left(\frac{\rank(f)}{2^i}\right)\right). \]
\end{theorem}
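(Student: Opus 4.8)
The plan is to prove the implication directly, by exhibiting, under the stated monochromatic-rectangle guarantee, a deterministic protocol for an arbitrary $f : \CX \times \CY \to \{0,1\}$ of the claimed cost. Two harmless normalizations: every subrectangle $M_f|_{\CA \times \CB}$ is itself the communication matrix of a Boolean function of rank at most $\rank(f)$, so the hypothesis in fact supplies a monochromatic sub-subrectangle of relative size $2^{-\gamma(\rank(M_f|_{\CA\times\CB}))}$ inside every rectangle we shall encounter; and we may assume $\gamma$ is nondecreasing (replacing $\gamma(r)$ by $\max_{s\le r}\gamma(s)$ only weakens the hypothesis and enlarges the target bound). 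Finally, since the two side-lengths of a monochromatic rectangle of relative size $2^{-\gamma}$ multiply to at least $2^{-\gamma}$ while each is at most the corresponding dimension, \emph{both} side-length ratios are at least $2^{-\gamma}$.

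The protocol is iterated from a single recursive step. The players maintain a ``live'' combinatorial rectangle $\CA\times\CB$, with the invariant that the input $(x,y)$ lies in it and that $r := \rank(M_f|_{\CA\times\CB})$ is at most a current ``rank budget''. The crux is a Main Lemma asserting that, under the hypothesis, the players can spend $\gamma(r) + O(\log r)$ bits of communication --- equivalently, refine $\CA\times\CB$ along a protocol into at most $2^{\gamma(r)}\cdot\poly(r)$ subrectangles --- so that in the resulting live subrectangle either the value of $f$ has been determined (the players landed in a monochromatic piece) or $\rank(M_f|_{\text{live}}) \le \lceil r/2\rceil$. Granting this, one starts from $\CX\times\CY$ with budget $\rank(f)$ and applies the Main Lemma $O(\log\rank(f))$ times until the budget, hence the live rank, reaches $1$; then the live rectangle is monochromatic and one bit finishes. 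The cost telescopes to $\sum_{i=0}^{O(\log\rank f)} (\gamma(\rank(f)/2^i) + O(\log(\rank(f)/2^i))) = \sum_i \gamma(\rank(f)/2^i) + O(\log^2\rank(f))$, using $\sum_{i=0}^{\log r}\log(r/2^i) = \Theta(\log^2 r)$.

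For the Main Lemma, I would take a monochromatic rectangle $\CA'\times\CB'$ of relative size $2^{-\gamma(r)}$ inside the live rectangle; both players announce membership of their coordinate in $\CA'$ (resp. $\CB'$), carving the live rectangle into the monochromatic leaf $\CA'\times\CB'$ (rank $\le 1$) and three complementary pieces to recurse on, and the density bound $2^{-\gamma(r)}$ caps the number of pieces produced at $2^{\gamma(r)}\poly(r)$, so naming one's piece costs $\gamma(r)+O(\log r)$ bits. The rank bookkeeping should come from subadditivity of rank under a partition of the rows or columns, $\rank(M)\le\rank(M|_{S\times *})+\rank(M|_{\bar S\times *})$ and its transpose, which one hopes forces the surviving pieces down to rank $\le\lceil r/2\rceil$ once enough monochromatic mass has been removed. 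I expect this last point to be the main obstacle: a purely area-based peeling argument is hopeless, since a rank-$r$ Boolean matrix may have $2^{\Theta(r)}$ distinct rows and thus be astronomically large relative to $r$, so stripping off a $2^{-\gamma(r)}$ fraction of the area repeatedly need not halve the rank --- one must argue that each peel genuinely erodes the rank of the remainder, exploiting both the low-rank and the Boolean structure together with a judicious choice of which rectangle to peel. (An alternative, protocol-free route is to bound the monochromatic-rectangle cover number of every rank-$\le r$ Boolean matrix under the hypothesis and invoke the Aho--Ullman--Yannakakis inequality $\CC_{det}(f)\le O(\log\chi_0(f)\cdot\log\chi_1(f))$; the same rank-erosion difficulty reappears there.)
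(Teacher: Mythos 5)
Your high-level plan is correct and is exactly the structure of the Nisan--Wigderson argument (as presented in Lovett's survey): a Main Lemma showing that, at the cost of $\gamma(r)+O(\log r)$ bits of communication, the live rank can be cut in half, followed by $O(\log r)$ iterations and the telescoping sum $\sum_i \gamma(r/2^i)+O(\log^2 r)$. Your normalizations (monotonizing $\gamma$, noting both side-ratios of a dense rectangle are at least $2^{-\gamma}$) are also fine.

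The gap --- which you yourself flag as ``the main obstacle'' and then leave unresolved --- is in the proof of the Main Lemma, and it is not a technicality; it is the heart of the theorem. Your plan peels off the monochromatic rectangle $\CA'\times\CB'$ itself and recurses on the three complementary pieces, hoping that repeated removal of small-area monochromatic pieces ``erodes'' the rank of what remains. That does not work, and the generic subadditivity facts you invoke (e.g. $\rank(M)\le\rank(M|_{S\times *})+\rank(M|_{\bar S\times *})$) point the wrong way: they only lower bound the rank of the pieces in terms of the rank of the whole. The actual ingredient is a lemma of Lov\'asz used by Nisan--Wigderson: if $\CA'\times\CB'$ is monochromatic inside a matrix $M$ of rank $r$, then
\[
\rank\bigl(M|_{\CA'\times\CY}\bigr)+\rank\bigl(M|_{\CX\times\CB'}\bigr)\ \le\ r+O(1).
\]
(Subtract the constant value of the rectangle so that $\CA'\times\CB'$ is all-zero; then a basis of rows from $\CA'$ together with any set of rows of $\bar\CA'$ whose restrictions to $\CB'$ are independent is jointly independent, since restricting to $\CB'$ kills the first group.) Hence one of the two full-width slabs $\CA'\times\CY$ or $\CX\times\CB'$ has rank at most $r/2+O(1)$. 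The correct partition strategy peels off this \emph{slab} --- not the monochromatic rectangle --- as one piece of the partition, and recurses on its complement $\bar\CA'\times\CY$ (resp.\ $\CX\times\bar\CB'$). Every removed piece has rank $\le r/2+O(1)$ by construction, with no ``erosion'' argument needed; the recursion terminates after $2^{\gamma(r)}\cdot O(r)$ steps because each step removes a $2^{-\gamma(r)}$-fraction of the remaining rows or columns, and a Boolean matrix of rank $r$ has at most $2^r$ distinct rows and at most $2^r$ distinct columns (restricting to a nonsingular $r\times r$ submatrix is injective on rows and on columns). This yields the claimed $2^{\gamma(r)+O(\log r)}$-piece partition into rank-$\le r/2$ rectangles, and the rest of your outline goes through. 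In short: the rank drop is certified for the piece you \emph{remove}, not hoped for in the piece you \emph{keep}, and the certificate is Lov\'asz's inequality; without it the proof does not close.
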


In particular, proving that the hypothesis of this theorem holds with $\gamma(d) = \polylog(d)$ would suffice to prove the log-rank conjecture.\footnote{This reduction is tight in a strong sense: A $c$-bit protocol for $f$ partitions $\CX \times \CY$ into $\leq 2^c$ monochromatic rectangles, one of which must have size at least $|\CX||\CY|\cdot2^{-c}$.} See \cite{Lov20} for more background on the log-rank conjecture.

\subsubsection{Incidence geometry and extremal combinatorics}\label{sec:inc-geo-background}

In this section, we give various definitions and notations that we will use throughout the rest of the paper.

In $\BR^d$, a \emph{hyperplane} is the locus of points $x \in \BR^d$ defined by an equation of the form $\langle a, x \rangle = b$, for some $a \neq 0 \in \BR^d, b \in \BR$. We refer to the vector $a$ as the \emph{normal vector} of $h$ and $b$ as its \emph{offset} (denoted $b(h)$). A pair of hyperplanes $h,h'$ are \emph{parallel} if for some constant $c \in \BR$ we have $a = ca'$ (where $h$ and $h'$ are defined by $\langle a,x \rangle = b$ and $\langle a',x \rangle = b'$, respectively).

A point $p$ and a hyperplane $h$ in $\BR^d$ are \emph{incident} if $p$ lies on $h$; we call the pair $(p,h)$ an \emph{incidence} (and say $p$ is \emph{incident to} $h$ and vice versa). In an ambient space $\BR^d$, we refer to a (finite) set $\CP$ of points together with a (finite) set $\CH$ of hyperplanes as a \emph{configuration}.  Configurations determine an \emph{incidence graph} $\Gr(\CP,\CH)$, an (unweighted, undirected) bipartite graph defined as follows: The left vertices are the points $\CP$, the right vertices are the hyperplanes $\CH$, and the edge $(p,h)$ is included iff $p$ is incident to $h$. Following Apfelbaum and Sharir~\cite{AS07}, we denote by $\I(\CP,\CH)$ the total number of incidences between $\CP$ and $\CH$ (equiv., the number of edges in $\Gr(\CP,\CH)$) and by $\rs(\CP,\CH)$ the largest number of edges in any complete bipartite subgraph of $\Gr(\CP,\CH)$; the reader may verify the equivalent characterization that \[ \rs(\CP,\CH) = \max_{S \text{ affine subspace } \subset \BR^d} (|\{p \in \CP: p \text{ lies on } S\}| \cdot |\{h \in \CH: S \text{ lies on } h\}|). \] Let $|\CP| = n$ and $|\CH| = m$; we refer to the ratios $\frac{\I(\CP,\CH)}{mn}$ and $\frac{\rs(\CP,\CH)}{mn}$ as the \emph{incidence} and \emph{complete bipartite subgraph densities} of the configuration $(\CP,\CH)$, respectively.\footnote{A quick note on our use of Apfelbaum and Sharir~\cite{AS07}'s notation: We use $n$ to denote the number of points and $m$ to denote the number of hyperplanes, which is opposite to \cite{AS07}. Also, \cite{AS07} uses $\Pi$ instead of $\CH$ to denote the set of hyperplanes. Finally, for context, in \cite{AS07}'s notation $\rs(\CP,\CH)$, $r$ refers to the quantity $|\{p \in \CP: p \text{ lies on } S\}|$ and $s$ refers to the quantity $|\{h \in \CH: S \text{ lies on } h\}|$; $\rs(\CP,\CH)$ maximizes the product $rs$ over all affine subspaces of $\BR^d$.}

Generally, bipartite graphs need not contain large complete bipartite subgraphs; indeed, even in random bipartite graphs of constant edge density, the largest complete bipartite subgraphs are logarithmically small \cite{Erd47}, which is tight up to constant factors \cite{ES35}. But lower bounds for complete bipartite subgraph density (and various analogues in hypergraphs) have been widely studied for specific classes of (hyper)graphs with ``structure''. Previous research has explored complete bipartite subgraph density and its analogues in settings including point-hyperplane incidences \cite{AS07,Do20}, line segment incidences \cite{PS01}, orientations of $k$-tuples of points \cite{BV98}, and common points in simplices defined by $k$-tuples of points \cite{Pac98,KKP+15} or bounded by $k$-tuples of hyperplanes \cite{BP14}. All of these problems fit into the framework of ``semi-algebraic'' hypergraphs, which arise from solutions to systems of ``low-complexity'' polynomial equations; complete bipartite sub-hypergraphs in semi-algebraic hypergraphs have been studied in \cite{APP+05,FGL+12,CFP+14,FPS16}. Non-geometric settings have also been studied, including graphs excluding a fixed induced subgraph \cite{EHP00} and graphs with small VC-dimension \cite{FPS19}. 

To the best of our knowledge, prior to the current work, the strongest known bounds for complete bipartite subgraph density in point-hyperplane incidence graphs were as follows. Fox, Pach, and Suk~\cite{FPS16} proved the following upper bound (which holds more generally for semi-algebraic relations) using a cell decomposition argument:

\begin{theorem}[{\cite[Corollary 1.2]{FPS16}} with $k = t = 2$]\label{thm:fps-lower-bound}
	Let $\mathcal{P}$ and $\mathcal{H}$ be a set of $n$ points and $m$ hyperplanes, respectively, in $\mathbb{R}^d$, and let $\epsilon = \frac{\I(\mathcal{P},\mathcal{H})}{mn}$. Then \[ \rs(\mathcal{P}, \mathcal{H}) \geq \Omega(\epsilon^{d+1} 2^{-d(40\log(d+1)+1)}\cdot mn). \]
\end{theorem}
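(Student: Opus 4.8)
The plan is to reduce the statement to a Zarankiewicz-type (K\H{o}v\'ari--S\'os--Tur\'an-type) upper bound on the number of incidences in configurations whose incidence graph is \emph{$K_{t,t}$-free}, and then to invoke the known such bound for point--hyperplane incidences. The reduction rests on an elementary observation: if $\Gr(\mathcal{P},\mathcal{H})$ contains a copy of $K_{s,t}$, on points $p_1,\dots,p_s$ and hyperplanes $h_1,\dots,h_t$, then the affine subspace $S=\bigcap_{j=1}^{t}h_j$ is nonempty (it contains $p_1$), contains each of $p_1,\dots,p_s$, and is contained in each of $h_1,\dots,h_t$; by the displayed characterization of $\rs$, this gives $\rs(\mathcal{P},\mathcal{H})\ge st$. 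Contrapositively, writing $R:=\rs(\mathcal{P},\mathcal{H})$, the graph $\Gr(\mathcal{P},\mathcal{H})$ is $K_{t_0,t_0}$-free with $t_0=\lceil\sqrt R\,\rceil+1$. It therefore suffices to establish an incidence bound of the form: a point--hyperplane configuration in $\BR^{d}$ with a $K_{t,t}$-free incidence graph satisfies $\I(\mathcal{P},\mathcal{H})\le C_d\cdot t^{2/(d+1)}(mn)^{1-1/(d+1)}+(\text{terms lower-order in } m,n)$ with $C_d\le\poly(d)$. Feeding in $t=t_0\lesssim\sqrt R$ and rearranging, the main term yields $R\ge\Omega\bigl(\I(\mathcal{P},\mathcal{H})^{d+1}/(mn)^{d}\bigr)=\Omega(\epsilon^{d+1}mn)$, and the claimed $2^{-O(d\log d)}$ factor is accounted for by $C_d^{d+1}\le 2^{O(d\log d)}$. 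In the complementary regime, where a lower-order term dominates, the configuration is concentrated on a lower-dimensional flat; such a flat (or, recursively, one of its sub-flats) already contains and is contained in enough of $\mathcal{P}$ and $\mathcal{H}$ to certify the bound, and this case is subsumed by the recursive structure of the incidence bound itself.

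The incidence bound above is a special case of the main theorem of Fox, Pach, and Suk~\cite{FPS16} (a semi-algebraic version of Zarankiewicz's problem), and is itself proved by a \emph{cell decomposition} argument together with an induction on the dimension $d$; a version restricted to the incidence count, with a weaker constant, appears already in Apfelbaum and Sharir~\cite{AS07}. The scheme is: fix a cutting parameter $r$ and partition $\BR^{d}$, with respect to $\mathcal{H}$, into a bounded number of relatively open cells (faces of a simplicial refinement of the arrangement of $\mathcal{H}$, or the complement of the zero set of a low-degree partitioning polynomial) such that each cell is crossed by at most $|\mathcal{H}|/r$ hyperplanes. Charge each incidence $(p,h)$ to the unique cell containing $p$: incidences charged to full-dimensional cells are few (such a $p$ meets only $\le|\mathcal{H}|/r$ hyperplanes), so most incidences involve points on lower-dimensional faces, each of which spans a flat of dimension $\le d-1$. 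On such a flat one recurses: either many hyperplanes of $\mathcal{H}$ contain it --- in which case the flat itself certifies a large complete bipartite subgraph --- or one obtains a genuine lower-dimensional point--hyperplane instance and applies the inductive hypothesis. The base case $d=1$ is ``points on a line versus points on a line,'' where a single Cauchy--Schwarz estimate gives $\rs\ge\I(\mathcal{P},\mathcal{H})^2/(mn)$.

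I expect the main obstacle to be the quantitative bookkeeping inside this recursion: obtaining the exact exponent $2/(d+1)$ on $t$ and, above all, keeping the multiplicative loss per level down to a $\poly(d)$ factor, so that the final constant is $2^{-O(d\log d)}$ rather than the $2^{-\Omega(d^{2})}$ one gets from a crude cell count or from the standard polynomial-partitioning constants. The naive step --- pigeonholing onto the single heaviest lower-dimensional cell --- costs a factor of roughly $\epsilon^{-d}$ per level, which the $(d+1)$-fold composition of the recursion would amplify catastrophically; the argument must instead aggregate the incidences over all cells lying in a common flat and recurse on that aggregate, tracking carefully how the relevant quantities (number of cells, number of hyperplanes crossing a cell, incidence count, and the two side sizes) transform under passage to a lower-dimensional flat. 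A secondary, more routine annoyance is handling incidences lying on the partitioning hypersurface itself, which is a variety rather than a flat: this is dealt with either by the usual ``recurse on the variety'' device or, more cleanly here, by insisting that the cutting be simplicial so that every cell boundary is a union of flats.
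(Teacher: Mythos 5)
This theorem is stated in the paper as a direct citation of Fox, Pach, and Suk~\cite{FPS16} and carries no proof of its own; it appears only as a benchmark against which the paper's \cref{thm:probabilistic-lower-bound} is compared. So there is no in-paper proof to compare your attempt against, and it must be judged on its own terms. Your opening reduction is correct and is essentially definitional: a $K_{s,t}$ in $\Gr(\CP,\CH)$ yields $\rs(\CP,\CH)\ge st$ (the displayed affine-subspace characterization of $\rs$ is exactly this), so the incidence graph is $K_{t_0,t_0}$-free for $t_0=\lceil\sqrt{\rs}\,\rceil+1$; and your exponent bookkeeping is the unique choice that works, giving $\rs\ge(\epsilon/C_d)^{d+1}mn$ and hence a final constant $2^{-O(d\log d)}$ precisely when $C_d\le\poly(d)$.

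The gap is that the proposal does not prove the theorem so much as repackage it. All of its quantitative content is concentrated in the asserted $K_{t,t}$-free incidence bound $\I(\CP,\CH)\le C_d\,t^{2/(d+1)}(mn)^{1-1/(d+1)}+\dots$ with $C_d\le\poly(d)$, which is never established, and your own closing paragraph correctly identifies the $\poly(d)$ (rather than $d^{\Theta(d)}$ or worse) per-level loss as \emph{the} obstacle and then leaves it as an open concern rather than a resolved step. Since that constant is exactly what separates the stated $2^{-d(40\log(d+1)+1)}$ from a vacuous bound, this is the entire theorem, not a detail. There is also an attribution problem: \cite{FPS16} as cited here is the Fox--Pach--Suk regularity-lemma paper, and its Corollary~1.2 \emph{is} the target statement --- it is not a Zarankiewicz-type incidence bound for you to feed into. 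The semi-algebraic Zarankiewicz theorem you have in mind is from the separate five-author Fox--Pach--Sheffer--Suk--Zahl paper, whose explicit $d$- and $t$-dependence you would still have to extract and match to the $t^{2/(d+1)}$, $\poly(d)$ shape you need. Finally, the lower-order-term discussion is off: when $m+n$ swamps $\epsilon mn$, one of $m,n$ is $O(1/\epsilon)$, and a single maximally heavy point or hyperplane already produces a rectangle of density $\Omega(\epsilon^2)$ --- this is the trivial case handled directly, not something ``subsumed by the recursive structure.'' In short, this is a reasonable sketch of one possible route, distinct from simply citing FPS16 as the paper does, but it is not a proof: its core quantitative step is assumed rather than shown.
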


Apfelbaum and Sharir~\cite[Theorem 1.2]{AS07} proved the related bound $\rs(\CP,\CH) \geq \Omega_d(\epsilon^{d-1} \cdot mn)$, but did not analyze the dependence on the dimension $d$; we suspect it is worse than the $2^{-\tilde{O}(d)}$ in \cref{thm:fps-lower-bound}. \cite{AS07} also proved the currently-best upper bound, using a lattice construction inspired by Elekes and T\'oth~\cite{ET05}:

\begin{theorem}[{\cite[Theorem 1.3]{AS07}}]\label{thm:as-upper-bound}
	For every $d \in \BN$, there exist arbitrarily large $n,m \in \BN$ such that there exists a set $\CP$ of $n$ points and a set $\CH$ of $m$ hyperplanes in $\BR^d$ such that $\I(\CP,\CH) \geq \Omega(nm/d)$ and $\rs(\CP,\CH) \leq O(mn \cdot 2^{-d} / \sqrt d)$.
\end{theorem}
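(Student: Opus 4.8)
The plan is to prove this by an explicit lattice construction, in the spirit of Elekes and T\'olth~\cite{ET05}. Fix $d$ and a size parameter; take $\CP$ to be a $d$-dimensional integer box in which one coordinate ranges over $\{1,\ldots,R\}$ with $R=\Theta(d)$ and the remaining $d-1$ coordinates range over a short interval such as $\{1,2\}$, and take $\CH$ to be the family of all ``graph'' hyperplanes $x_d=\sum_{i<d}a_ix_i+b$ whose coefficient vector $(a_i)_{i<d}$ lies in a fixed structured set (say $\{0,1\}^{d-1}$) and whose offset $b$ is such that the hyperplane meets the box. The guiding intuition is that every hyperplane in $\CH$ is automatically ``non-degenerate'': its $\approx 2^{d-1}$ lattice points fill a sub-grid rather than clustering on a lower-dimensional flat, so the only possible source of a large complete bipartite subgraph is a low-dimensional flat that is \emph{simultaneously} rich in points of $\CP$ and contained in many hyperplanes of $\CH$ — and the structure of the coefficient set is precisely what will rule this out.

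For the incidence count, each point $p\in\CP$ lies on exactly one hyperplane per admissible coefficient vector (the offset being forced to $x_d-\sum_{i<d}a_ix_i$), and the fraction of $\CH$ hit by a typical point works out to $\Theta(1/R)=\Theta(1/d)$, giving $\I(\CP,\CH)\ge\Omega(mn/d)$. Arbitrarily large $n$ and $m$ are obtained by letting the size parameter grow (e.g., by a union of translated or dilated copies of the base box), which preserves all the densities up to constant factors.

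The heart of the argument is bounding $\rs(\CP,\CH)$. Using the stated characterization, it suffices to bound $|\CP\cap S|\cdot|\{h\in\CH:S\subseteq h\}|$ over all affine flats $S$; write $j=\dim S$. The grid structure controls the first factor: if $S$ contains the ``tall'' coordinate direction then no hyperplane of $\CH$ contains $S$ at all, and otherwise projecting out that coordinate is injective on $S$, so $|\CP\cap S|\le 2^{j}$ (a $j$-flat carries at most $2^j$ vertices of a combinatorial cube). For the second factor, $S$ being $j$-dimensional forces the normal of any $h\supseteq S$ into a $(d-j)$-dimensional subspace, which is cut down by one more dimension by the structural constraint on $\CH$ (every such normal has last coordinate $1$); the same ``$\le 2^k$ cube-vertices on a $k$-flat'' fact then bounds the number of admissible coefficient vectors in the resulting $(d-j-1)$-dimensional affine family by $2^{d-j-1}$. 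Multiplying, $|\CP\cap S|\cdot|\{h:S\subseteq h\}|\le 2^{j}\cdot 2^{d-j-1}=2^{d-1}$ uniformly in $j$, so $\rs(\CP,\CH)\le O(2^{d})$; since $mn=\Theta(d^2\,2^{2d})$, this gives $\rs(\CP,\CH)\le O(mn\cdot 2^{-d}/d)\le O(mn\cdot 2^{-d}/\sqrt d)$.

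The step I expect to be the main obstacle is the uniform-in-$j$ bound on $|\{h\in\CH:S\subseteq h\}|$: a flat can be degenerate with respect to the lattice in many ways (axis-parallel, cut out by sparse integer relations, aligned with a sub-cube, $\ldots$), and one must check that no such flat admits an anomalously large number of structured normals in the associated subspace. This is exactly where the choice of the coefficient set is delicate — it has to be rich enough to force incidence density $\Omega(1/d)$ yet thin enough on every proper affine subspace — and it is morally where the central binomial coefficient $\binom{d}{\lfloor d/2\rfloor}\approx 2^d/\sqrt d$ enters (the number of lattice hyperplanes through a generic point of the box). A secondary, more routine obstacle is the bookkeeping of boundary effects (hyperplanes meeting the box only partially), which should perturb $n$, $m$, and $\I$ by at most constant factors, together with the verification that the replication trick used for ``arbitrarily large'' leaves the densities intact.
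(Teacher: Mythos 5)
Your proposal is correct and follows essentially the same route as the paper's (and Apfelbaum--Sharir's): the Elekes--T\'oth lattice-box construction, with the ``at most $2^k$ hypercube vertices lie on a $k$-flat'' lemma controlling both $|\CP\cap S|\le 2^j$ and the number of admissible normals $\le 2^{d-j-1}$, so the product is uniformly $\le 2^{d-1}$. The only cosmetic deviations from the paper are that you use $\{1,2\}$ rather than $\{-1,1\}$ for the short coordinates and let the offset $b$ vary rather than fixing $b=0$ and widening the last-coordinate range; either variant gives $\I/mn=\Theta(1/d)$ and $\rs/mn\le O(2^{-d}/d)$, comfortably within the claimed bounds. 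One small correction: the $\sqrt d$ in the statement is \emph{not} morally a central binomial coefficient appearing in the construction — a generic point lies on exactly $2^{d-1}$ hyperplanes, not $\binom{d}{\lfloor d/2\rfloor}$ — it is simply slack in the theorem statement, as your own chain $\rs/mn\le O(2^{-d}/d)\le O(2^{-d}/\sqrt d)$ already shows. Relatedly, the step you flag as ``the main obstacle'' (ruling out flats with anomalously many structured normals) is disposed of cleanly by the same cube lemma once one observes that if $S$ avoids the tall direction $e_d$, the $j$ linear constraints that $S$'s direction vectors impose on $(a_1,\ldots,a_{d-1})$ are automatically independent; there is no delicate case analysis lurking there.
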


\subsubsection{Connecting low-rank Boolean matrices and incidence geometry}\label{sec:connecting-logrank-incgeo}

The rank $d$ of a Boolean matrix $M$ has a natural incidence-geometric interpretation, as observed in unpublished work by Golovnev, Meka, Sudan, and Velusamy~\cite{GMSV19}. Given a factorization $M=PH$ where $P\in\BR^{n \times d},H\in\BR^{d \times m}$, we view the $i$-th row of $P$ as a point in $\BR^d$, and the $j$-th column of $H$ as the normal vector of a pair of parallel hyperplanes with offsets 0 and 1. Then the entry at a particular row and column in $M$ determines which of the corresponding hyperplanes the corresponding point is incident to. The hope that there is a large monochromatic rectangle in $M$ translates to the hope that there is a large complete bipartite subgraph in the point-hyperplane incidence graph, or equivalently that there is an affine subspace that is contained in many hyperplanes and contains many points. Moreover, the fact that the matrices under consideration are Boolean matrices implies that these point-hyperplane configurations have an unusually large density of incidences (specifically 50\% of the point-hyperplane pairs are incident!).

One could ask if simply the high density of incidences suffices to imply the existence of a large complete bipartite subgraph (of density $2^{-{\polylog(d)}}$ in $d$ dimensions). This is known to be false and a 2016 construction of Lovett~\cite{Lov16} with density $2^{-\Theta(\sqrt d)}$ is a counterexample.\footnote{Though admittedly the authors were not aware of this at earlier stages of this writing~\cite{SS21-early-version}. We thank Lovett~\cite{Lov16}, \Palvolgyi~\cite{Pal21}, and Fox and Wigderson~\cite{FW21} for pointing this out to us, and for suggesting related counterexamples.} See \cref{sec:set-system-upper-bound} for discussion on this and related constructions.

\subsection{Contributions}

\subsubsection{Moderately large complete bipartite subgraphs in general configurations}\label{sec:general-conj}

As discussed in the previous subsection, there exist configurations with incidence density $\Omega(1)$ but complete bipartite subgraph density $2^{-\Omega(\sqrt d)}$. Only assuming incidence density $\Omega(1)$, the following conjecture is the strongest possible bound that may yet turn out to be true; it is roughly an incidence-geometric analogue of a conjecture of Lovett on sparse low-rank matrices~\cite[Conjecture 5.1]{Lov16}:

\begin{conjecture}[Configurations with incidence density $\Omega(1)$ have moderately large complete bipartite subgraphs]\label{conj:general-conj}
The following is true for every fixed $\epsilon > 0$. In $\BR^d$, let $\CP$ be a collection of $n$ points and $\CH$ a collection of $m$ hyperplanes such that $\I(\CP,\CH) \geq \epsilon \cdot mn$. Then \[ \rs(\CP,\CH) \geq mn \cdot 2^{-{O_{\epsilon}(\sqrt d)}}. \] (The constant in $O_\epsilon(\sqrt d)$ may depend arbitrarily on $\epsilon$.)
\end{conjecture}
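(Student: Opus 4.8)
\medskip
\noindent\emph{Towards a proof.} The plan is to recast \cref{conj:general-conj} linear-algebraically and then run a recursive rank-halving argument in the spirit of the Nisan--Wigderson reduction (\cref{thm:NW95}) and Lovett's bound (\cref{thm:lovett-sqrt-rank}). Homogenize: replace each point $p\in\CP$ by $\tilde p=(p,1)\in\BR^{d+1}$ and each hyperplane $h=\{x:\langle a,x\rangle=b\}$ by the vector $\tilde a=(a,-b)\in\BR^{d+1}$, so that $p$ lies on $h$ iff $\langle\tilde p,\tilde a\rangle=0$. The configuration is then encoded by the matrix $A\in\BR^{\CP\times\CH}$ with entries $A_{p,h}=\langle\tilde p,\tilde a\rangle$: it has rank at most $d+1$, has at least $\epsilon mn$ zero entries by hypothesis, and $\rs(\CP,\CH)$ equals the largest size $|\CA|\cdot|\CB|$ of an all-zero submatrix $A|_{\CA\times\CB}$ (such a submatrix corresponds to the affine flat spanned by the points of $\CA$, which every hyperplane of $\CB$ then contains). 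So it suffices to show: every real matrix of rank $r$ with at least an $\epsilon$-fraction of zero entries contains an all-zero submatrix of fractional size $2^{-O_\epsilon(\sqrt r)}$.

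For this I would aim to establish the following \emph{rank-halving dichotomy}: a real matrix of rank $r$ with at least an $\epsilon$-fraction of zeros either (i) already contains an all-zero submatrix of fractional size $\geq 2^{-O_\epsilon(\sqrt r)}$, in which case we stop; or (ii) contains a submatrix $A|_{\CA\times\CB}$ of rank at most $r/2$ with $|\CA|\cdot|\CB|\geq 2^{-O_\epsilon(\sqrt r)}\cdot mn$ that still has an $\Omega(\epsilon)$-fraction of zeros. Geometrically, the rank drop in (ii) arises in one of two ways: either the points of $\CA$ all lie on an affine flat of dimension $\approx r/2$ (and we keep $\CB=\CH$, restricting the configuration to that flat), or the hyperplanes of $\CB$ all contain a common affine flat of codimension $\approx r/2$, i.e.\ form a thin pencil (and we keep $\CA=\CP$, projecting out the common flat); both operations preserve incidences. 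Granting this dichotomy, one iterates case (ii): the rank falls $r\to r/2\to r/4\to\cdots\to O(1)$ over $O(\log r)$ rounds, and at the end the generic bound in bounded dimension (\cref{thm:fps-lower-bound}, or the elementary $\Omega(\epsilon^{2d}/d)$ bound established elsewhere in this paper, applied with $d=O(1)$) produces an all-zero submatrix of constant fractional size in the final subconfiguration. The losses multiply to $\prod_{i\geq 0}2^{-O_\epsilon(\sqrt{r/2^i})}=2^{-O_\epsilon(\sqrt r)}$, since the exponents sum to $O_\epsilon(\sqrt r)$; pulling the final all-zero submatrix back through the chain of restrictions, and controlling how the $\Omega(\epsilon)$-factor compounds over $O(\log r)$ rounds (so that the implied constant does not decay), is bookkeeping analogous to \cref{thm:NW95}.

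I expect the dichotomy to be the real obstacle, and plausibly out of reach of the elementary methods used elsewhere in this paper. What matters is the \emph{rate}: incurring only a $2^{-O_\epsilon(\sqrt r)}$ loss while \emph{halving} the rank is exactly what yields $2^{-O_\epsilon(\sqrt d)}$ overall, whereas the cheaper moves --- shrinking the rank by $1$ at $\poly(\epsilon)$ cost, or by $\sqrt r$ at $2^{-O(\sqrt r)}$ cost --- only reproduce the $2^{-O_\epsilon(d)}$ bound already proved in this paper. Establishing the right rate is morally the geometric analogue of the rank-reduction step inside Lovett's proof of \cref{thm:lovett-sqrt-rank} (``a rank-$d$ Boolean matrix has a subrectangle where the rank halves, of density $2^{-\tilde{O}(\sqrt d)}$''), which is proved via a large-bias/discrepancy argument. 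But one cannot invoke that result as a black box: unlike in the log-rank setting, the incidence matrix of a dense configuration can have arbitrarily large real rank (for instance, dense thresholded products of two Boolean matrices are realizable as incidence matrices in a fixed ambient dimension), so Lovett's theorem does not apply to it. One must instead work directly with the structure ``zero-pattern of a rank-$r$ matrix with many zeros'' --- adapting the bias argument there, or substituting a genuinely geometric input such as covering-number estimates for normal directions on $S^{r-1}$ (a natural source of the $\sqrt r$) coupled with a Szemer\'edi--Trotter-type degeneracy dichotomy. Identifying the right such input appears to require a new idea; consistently with this, later in the paper we observe that improving \cref{thm:lovett-sqrt-rank} is precisely what would give the first bounds for parallel $3$-partitioned configurations beating the generic $2^{-O_\epsilon(d)}$ bound, a sign of how tightly this regime is bound up with the $\sqrt{\cdot}$ barrier.
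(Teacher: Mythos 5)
This statement is \cref{conj:general-conj}, which the paper states as an \emph{open conjecture}: there is no proof of it in the paper to compare against, and the paper's own results go the other way (the constructions in \cref{sec:set-system-upper-bound} show the exponent $O_\epsilon(\sqrt d)$ would be tight, while the best \emph{lower} bound actually proved, \cref{thm:probabilistic-lower-bound}, is only $2^{-O_\epsilon(d)}$). Your proposal does not close this gap, and to your credit you say so yourself. The preliminary reformulation is fine: homogenizing to $\tilde p=(p,1)$, $\tilde a=(a,-b)$ correctly encodes the configuration as a rank-$(d+1)$ real matrix with an $\epsilon$-fraction of zeros whose largest all-zero submatrix has size exactly $\rs(\CP,\CH)$. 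The iteration and bookkeeping are also unobjectionable (a constant-factor loss in zero-density per round compounds to only $\poly(1/r)$ over $O(\log r)$ rounds, which is absorbed into $2^{-O(\sqrt r)}$).

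The genuine gap is that your ``rank-halving dichotomy'' is not a lemma you prove or cite --- it is \emph{equivalent} to the conjecture itself. The conjecture immediately implies the dichotomy (case (i) always holds), and the dichotomy implies the conjecture by your iteration, so the reduction is circular: all of the difficulty has been repackaged into the dichotomy. Moreover, your own discussion explains why no known tool supplies it: Lovett's rank-reduction step needs Booleanity (the $0/1$ incidence indicator matrix of a dense configuration can have rank far exceeding $d$, so \cref{thm:lovett-sqrt-rank} cannot be invoked on it), and the elementary or cell-decomposition methods (\cref{thm:probabilistic-lower-bound}, \cref{thm:fps-lower-bound}) only give losses of $2^{-O(d)}$ per full rank reduction, which reproduces the known bound rather than $2^{-O(\sqrt d)}$. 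The two ``geometric sources'' of rank drop you list (points confined to a low-dimensional flat, or hyperplanes forming a thin pencil) are not shown to be exhaustive, and no argument is given that a dense-zero configuration avoiding case (i) must exhibit either one at the claimed density. So the proposal is a plausible strategy outline with its central step unproven; it should not be presented as a proof of the conjecture.
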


We note that \cref{conj:general-conj} is too weak to prove the log-rank conjecture (\cref{conj:log-rank}). It would, however, yield a result roughly matching the current best upper bound (\cref{thm:lovett-sqrt-rank}, due to \cite{Lov16}) on communication complexity as a function of the rank (up to some logarithmic factors). 

\subsubsection{Large complete bipartite subgraphs in ``structured'' configurations}

In view of the issues discussed in \cref{sec:connecting-logrank-incgeo,sec:general-conj}, we consider what \emph{additional} properties of point-hyperplane configurations could potentially lead to the presence of a large complete bipartite subgraph. As discussed in \cref{sec:connecting-logrank-incgeo}, Boolean matrices lead to configurations where the hyperplanes can be partitioned into {\em parallel pairs} such that each pair {\em covers} the set of points. This leads to an easy reformulation of the log-rank conjecture (\cref{conj:original-conj} below). We extend this formulation to the notion of \emph{parallel $k$-partitions} (see \cref{defn:parallel-partition} below) that allow the sets of hyperplanes to be partitioned into sets of size (at most) $k$ for an arbitrary constant $k$. We conjecture that $k$-parallel partitionable configurations contain large complete bipartite subgraphs (\cref{conj:constant-sized-parallel-partitions} below), and in \cref{thm:k-reduction} below, we show that this conjecture is actually equivalent to the log-rank conjecture. \autoref{thm:k-reduction} has a natural interpretation as a \emph{reduction} from bounds for parallel $k$-partitioned configurations to (stronger) bounds for $(k-1)$-partitioned configurations.

In the matrix corresponding to a point-hyperplane configuration, there is a natural linear-algebraic property equivalent to the presence of a parallel $k$-partition, which we refer to as \emph{$k$-listability}: A matrix is $k$-listable if every column has at most $k$ distinct entries (though these sets may differ arbitrarily across columns). \cref{thm:k-reduction} can hence be stated linear-algebraically, in closer spirit to the original log-rank conjecture: the log-rank conjecture is equivalent to the assertion that every $k$-listable matrix contains a large $1$-listable submatrix. (See \cref{sec:conjectures} for a more careful account of the connection between matrices, configurations, listability, and parallel partitionability.)

We believe that the {\em covering} aspect of the point-hyperplane incidences is a key element of the log-rank conjecture and posit an extension (which does not immediately seem to be equivalent to the log-rank conjecture, nor does it seem to have a simple linear-algebraic formulation). Specifically in \cref{conj:constant-sized-partitions} we suggest that if a set of hyperplanes can be partitioned (in a not-necessarily-parallel way) into blocks of size at most $k$ such that each block covers a given set of points, then the incidence graph corresponding to this configuration has a large complete bipartite subgraph.

\subsubsection{New bounds for complete bipartite subgraph size in general configurations}

Returning to the more basic question of the incidence density of a configuration versus the size of its largest complete bipartite subgraph, we present two results that improve the state of the art. Our first result here is a lower bound on the size of complete bipartite subgraphs in incidence graphs of constant density:

\begin{theorem}\label{thm:probabilistic-lower-bound}
	Let $\mathcal{P}$ and $\mathcal{H}$ be a set of $n$ points and $m$ hyperplanes, respectively, in $\mathbb{R}^d$, such that $\I(\mathcal{P},\mathcal{H}) \geq \epsilon mn$. If $n$ is sufficiently large (in particular, if $\frac{\epsilon^d}2n > 1$), then \[ \rs(\mathcal{P}, \mathcal{H}) \geq \Omega\left(\frac{\epsilon^{2d}}{d} mn\right). \]
\end{theorem}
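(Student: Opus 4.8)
The plan is to find a large complete bipartite subgraph by a probabilistic argument: sample a small set of points, look at the affine subspace they span, and argue that (i) this subspace has low dimension (since we only sampled a few points), hence (ii) it is contained in many hyperplanes of $\CH$ (by a counting/pigeonhole argument using the high incidence density), while (iii) it still contains a reasonable number of points of $\CP$. Concretely, I would sample points $p_1, \ldots, p_d$ independently and uniformly at random from $\CP$ (with replacement), and let $S$ be their affine hull; since we took only $d$ points, $S$ is always an affine subspace of $\BR^d$ — in fact we should think of building $S$ incrementally, stopping as soon as it stabilizes.

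The key step is to bound the expected number of hyperplanes containing $S$ from below. For a fixed hyperplane $h$, $S \subseteq h$ iff all of $p_1, \ldots, p_d$ lie on $h$; this happens with probability $(\deg(h)/n)^d$, where $\deg(h) = |\{p \in \CP : p \text{ on } h\}|$ is the degree of $h$ in the incidence graph. By convexity (Jensen) applied to $x \mapsto x^d$, and using $\sum_h \deg(h) = \I(\CP,\CH) \geq \epsilon m n$, the expected number of hyperplanes containing $S$ is
\[ \sum_{h \in \CH} \left(\frac{\deg(h)}{n}\right)^d \geq m \left(\frac{1}{m}\sum_{h}\frac{\deg(h)}{n}\right)^d \geq m \cdot \epsilon^d. \]
Symmetrically — and this is the crux — I also need that $S$ contains many points. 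Here I would argue that the subspace $S$ spanned by a random sample contains, in expectation, at least roughly an $\epsilon^{d}$-fraction of the points as well: intuitively, the hyperplanes through $S$ are ``popular'' hyperplanes (each passes through $\geq \epsilon n$-ish points), and the points on many popular hyperplanes are themselves popular; one sets this up by a second-moment or direct counting argument, or alternatively by the symmetric dual construction (sample $d$ hyperplanes, intersect them, count points on the intersection). The claim $\frac{\epsilon^d}{2} n > 1$ in the hypothesis is exactly what guarantees the sampled subspace generically contains at least one — and then, by the averaging, many — points.

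The main obstacle will be coupling the two bounds: I need a single affine subspace $S$ that \emph{simultaneously} lies on $\gtrsim \epsilon^d m$ hyperplanes and contains $\gtrsim \epsilon^d n$ points, giving $\rs \gtrsim \epsilon^{2d} mn$; the stray factor of $1/d$ should come from a union bound over the $d$ incremental sampling steps, or from the probability that the sampling ``succeeds'' in reaching a subspace of the right dimension. I would handle this by writing the count $|\{h : S \subseteq h\}| \cdot |\{p : p \in S\}|$ as a single random variable, lower-bounding its expectation via the product estimates above (after checking the relevant events are positively correlated, or by a Cauchy–Schwarz/Paley–Zygmund style argument to move from expectation of a product to a product of expectations), and then invoking that its expectation being $\Omega(\epsilon^{2d} mn / d)$ forces some outcome to achieve at least that value. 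The honest technical work is in making the ``$S$ contains many points'' half as clean as the ``$S$ lies on many hyperplanes'' half, since the latter is an immediate Jensen computation while the former needs the structure of the incidence graph; I expect the $1/d$ loss to be an artifact of this step rather than essential.
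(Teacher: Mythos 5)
Your overall framework — probabilistic sampling of $d$ objects, deriving an affine subspace, and combining an expectation bound on one side with a union-bound-over-stages argument on the other — is exactly the paper's approach, except the paper works in the dual direction: it samples $d$ hyperplanes $H_1,\ldots,H_d$ uniformly from $\CH$ and sets $S := H_1 \cap \cdots \cap H_d$, rather than sampling points and taking affine hulls. In that direction the ``easy'' half is the point side (the expected fraction of $\CP$ lying on $S$ is exactly $\epsilon^d$ by independence, no Jensen needed, and a bounded-random-variable/Markov argument then gives the event $G$ that at least an $\frac{\epsilon^d}{2}$-fraction of points lie on $S$ with probability $\geq \frac{\epsilon^d}{2}$), and the ``hard'' half is the hyperplane side. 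Your Jensen computation for the hyperplane side in your dual setup is fine.

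The genuine gap is that you do not actually resolve the hard half, and the candidate fixes you list would not work as stated. ``Move from expectation of a product to a product of expectations'' is backwards — you need the other direction, $\E[A]\E[B] \lesssim \E[AB]$, which requires positive correlation that is not obvious here, and Paley--Zygmund controls the lower tail of a single variable, not correlations between two. The paper sidesteps correlations entirely with the following clever argument, which your phrase ``union bound over the $d$ incremental sampling steps'' gestures at but does not pin down: for each $j\in[d]$, define the bad event $B_j$ that (i) the current intersection $H_1 \cap \cdots \cap H_{j-1}$ is contained in few ($< \frac{\epsilon^d}{3d}$-fraction of) hyperplanes, yet (ii) $H_j$ happens to contain it anyway. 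Since $H_j$ is independent of the past, $\Pr[B_j] < \frac{\epsilon^d}{3d}$, so by a union bound, with positive probability $G$ holds and no $B_j$ holds. Now the hypothesis $\frac{\epsilon^d}{2}n > 1$ becomes decisive: $G$ forces $S$ to contain more than one point, so (by \cref{prop:decrease-dim}) not every $H_j$ can cut dimension; at the first stage $j^*$ where $h_{j^*}$ already contains $h_1\cap\cdots\cap h_{j^*-1}$, the negation of $B_{j^*}$ yields that at least an $\frac{\epsilon^d}{3d}$-fraction of hyperplanes contain $h_1\cap\cdots\cap h_{j^*-1} \supseteq S$. Multiplying the two bounds gives $\rs \geq \frac{\epsilon^d}{2}n \cdot \frac{\epsilon^d}{3d}m$. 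This stage-wise argument, keyed to the \emph{first} stabilization step and using the nondegeneracy hypothesis to guarantee stabilization happens, is the core missing idea; without it your proposal does not yet constitute a proof.
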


\cref{thm:probabilistic-lower-bound} is proven in \cref{sec:probabilistic-lower-bound} using an elementary probabilistic argument. Compared to the previously-best lower bound we are aware of (\cref{thm:fps-lower-bound}, due to Fox, Pach, and Suk~\cite{FPS16}), for fixed $\epsilon$ we get slightly better dependence on $d$ (exponentially small in $O(d)$ instead of $O(d \log d)$), and our proof is also drastically simpler.

On the flip side, we are able to quantitatively improve the previously-best upper bound (\cref{thm:as-upper-bound}, due to Apfelbaum and Sharir~\cite{AS07}):

\begin{theorem}\label{thm:lattice-upper-bound}
	For every $d > 0$, there exists a set $\CP$ of $n$ points and a set $\CH$ of $m$ hyperplanes in $\BR^d$ such that $\I(\CP,\CH) \geq \Omega(nm/\sqrt d)$ and $\rs(\CP,\CH) \leq O(mn \cdot 2^{-d} / \sqrt d)$.
\end{theorem}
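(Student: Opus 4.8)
The plan is to improve on Apfelbaum and Sharir's construction (\cref{thm:as-upper-bound}) by tightening the incidence density from $\Omega(1/d)$ to $\Omega(1/\sqrt d)$ while preserving the complete bipartite subgraph bound $O(mn \cdot 2^{-d}/\sqrt d)$. Since \cite{AS07}'s construction is a lattice construction inspired by Elekes and T\'oth~\cite{ET05}, I would first carefully re-examine that construction to locate where the $1/d$ arises. The natural candidate is a union bound or a counting step where a factor of $d$ is lost because incidences are distributed across $d$ ``coordinate directions'' or $d$ families of parallel hyperplanes; the hope is that a more symmetric choice of point set and hyperplane set equalizes the contributions so that a $\sqrt d$ (rather than $d$) normalization suffices, perhaps via a central-limit / Gaussian-concentration estimate on how many lattice points lie on a typical hyperplane.

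Concretely, I would take $\CP$ to be a section of the integer lattice, say $\CP = \{0,1,\ldots,N-1\}^d$ (or a ball of radius $\sim\sqrt d$ worth of lattice points, appropriately scaled), and $\CH$ to be the family of hyperplanes $\langle a, x\rangle = b$ where $a$ ranges over a carefully chosen finite set of integer normal vectors (e.g. vectors with small coordinates, or $\pm 1$ vectors) and $b$ ranges over the integers in the relevant range. The first step is to count $\I(\CP,\CH)$: for each normal direction $a$, the number of lattice points of $\CP$ on the hyperplanes $\langle a,x\rangle = b$ summed over valid $b$ is essentially $|\CP|$ times the number of ``active'' offsets, so the incidence density is controlled by how concentrated the distribution of $\langle a, x\rangle$ is as $x$ ranges over $\CP$. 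By choosing the scaling of $\CP$ so that $\langle a, x\rangle$ takes on $\Theta(\sqrt d)$ typical values (which is what concentration of a sum of $d$ bounded independent-ish terms gives), we get incidence density $\Omega(1/\sqrt d)$ rather than $\Omega(1/d)$.

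The second step is to upper-bound $\rs(\CP,\CH)$. Here the key geometric fact is that an affine subspace $S$ contained in many hyperplanes of $\CH$ must be low-dimensional — its dimension drops by one for each linearly independent normal vector among the hyperplanes containing it — and a low-dimensional flat through lattice points can contain only few lattice points of $\CP$ relative to $|\CP|$, with the loss being roughly $2^{-d}$ per codimension accumulated (this is the source of the $2^{-d}$ factor, matching \cite{AS07}). The quantitative claim to verify is that $|\{p \in \CP : p \in S\}| \cdot |\{h \in \CH : S \subseteq h\}| \le O(|\CP||\CH| \cdot 2^{-d}/\sqrt d)$ for every flat $S$; this is a case analysis on $\dim S$, trading off the number of hyperplanes containing $S$ (large only if $S$ has small dimension) against the number of points on $S$ (large only if $S$ has large dimension), with the crossover happening so that the product is maximized at a predictable dimension.

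The main obstacle I anticipate is the second step — getting a \emph{clean} bound on the product $r \cdot s$ over all flats $S$ simultaneously, uniformly in the dimension of $S$, without losing extra polynomial-in-$d$ factors that would wash out the improvement. In particular, one must argue that the chosen hyperplane family $\CH$ does not have ``too many'' hyperplanes through any fixed low-dimensional flat (a non-degeneracy / general-position-in-spirit condition on the integer normal vectors), while \emph{still} having enough hyperplanes overall to keep $|\CH|$ and the incidence count where we want them. Balancing these two demands — richness of $\CH$ for the incidence lower bound versus sparseness of $\CH$ around any flat for the $\rs$ upper bound — is the delicate part, and is presumably exactly what \cite{AS07} handled; our contribution is to redo it with a choice of parameters that extracts the extra $\sqrt d$.
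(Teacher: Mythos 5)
Your central insight is correct and is exactly the engine of the paper's proof: in Apfelbaum--Sharir's construction the last coordinate $x_d$ ranges over an interval of width $\Theta(d)$, but by concentration the sums $\sum_{i<d} a_i x_i$ land in a $\Theta(\sqrt d)$-width window with constant probability, so restricting the point set to that window keeps a constant fraction of the incidences while shrinking $|\CP|$ by a $\sqrt d$ factor. However, you flag the $\rs(\CP,\CH)$ upper bound as the ``delicate part'' and propose to rebuild the flat-counting case analysis from scratch over a new lattice (a cube $\{0,\ldots,N-1\}^d$ or a ball, with hyperplanes at varying offsets $b$). This misses the cleanest implementation, which makes that step cost nothing. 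The paper keeps $\CH$ \emph{exactly} as in~\cite{AS07} --- all hyperplanes through the origin with normals in $\{0,1\}^{d-1}\times\{-1\}$, so there is no varying offset $b$; \`a la Elekes--T\'oth, the ``offset'' is folded into the extra coordinate $x_d$ of the point set --- and takes $\CP$ to be the \emph{subset} of Apfelbaum--Sharir's universe $\CU = \{-1,1\}^{d-1}\times\{-(d-1),\ldots,d-1\}$ obtained by restricting $x_d$ to $\{-2\sqrt{d-1},\ldots,2\sqrt{d-1}\}$. Since $\CP \subseteq \CU$, the inequality $\rs(\CP,\CH) \le \rs(\CU,\CH) \le 2^{d-1}$ is inherited verbatim from~\cite{AS07}, with no balancing to redo.

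The only genuinely new work is a Chernoff--Hoeffding estimate (your ``concentration of a sum of $d$ bounded independent-ish terms'') showing the restricted window retains a $\ge 1-2/e^2$ fraction of $\I(\CU,\CH) = 2^{2d-2}$; since $n=|\CP|=\Theta(2^d\sqrt d)$ and $m=2^{d-1}$, this gives $\I(\CP,\CH)/mn\ge\Omega(1/\sqrt d)$ and $\rs(\CP,\CH)/mn\le O(2^{-d}/\sqrt d)$ immediately. A small additional correction: the $2^{-d}$ factor does not come from a loss of ``$2^{-d}$ per codimension accumulated.'' In~\cite{AS07} a $j$-flat contains at most $2^j$ points of $\CU$ and lies in at most $2^{d-j-1}$ hyperplanes of $\CH$, so the product is $\le 2^{d-1}$ \emph{uniformly} in $j$ --- there is no crossover dimension where the maximum concentrates, and the $2^{-d}$ factor appears only after dividing by $mn=\Theta(2^{2d}\sqrt d)$. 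So your plan is directionally right, but you have located the difficulty in the wrong step and proposed a genuinely different construction that would require redoing the $\rs$ analysis from scratch; choosing $\CP$ as a subset of $\CU$ is what makes that analysis free.
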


The only difference between \cref{thm:lattice-upper-bound} and the earlier \cref{thm:as-upper-bound} is the gain of a $\sqrt d$ factor in the lower bound on $\I(\CP,\CH)$, which we achieve by exhibiting a dense subset of \cite{AS07}'s construction (see \cref{claim:dense-subgraph} below). (That is, \cref{thm:as-upper-bound} has only $\I(\CP,\CH) = \Theta(mn /d)$. Other explicit constructions, discussed in \cref{sec:set-system-upper-bound}, have $\I(\CP,\CH) \geq \Omega(mn)$ but only $\rs(\CP,\CH) \leq O(mn \cdot 2^{-\Omega(\sqrt d)})$~\cite{Lov16,Lov21,Pal21,FW21}.) But since the incidence density of \cref{thm:lattice-upper-bound}'s construction is still not constant, it is too small to falsify our most general conjecture (\cref{conj:general-conj}).

\subsubsection{Explicit constructions of configurations without large complete bipartite subgraphs, and products of Boolean matrices}

In \cref{sec:set-system-upper-bound}, we analyze an explicit point-hyperplane configuration which was suggested as a counterexample by \Palvolgyi~\cite{Pal21}. Our analysis demonstrates that $O(\sqrt d)$ is the best possible exponent in \cref{conj:general-conj}. (A different counterexample with the same quantitative parameters and a related analysis, due to Lovett~\cite{Lov16} and also indicated by Fox and Wigderson~\cite{FW21}, was already known.) More generally, we observe that these constructions all arise from products of Boolean matrices, and we pose special cases of our general conjectures for these types of matrices as interesting variants. Since they involve products of Boolean matrices, we also describe them in the language of extremal combinatorics of set families and connect them to prior results in those areas.

\subsubsection{Discussion}

We believe that our new incidence-geometric and linear-algebraic frameworks for interpreting the log-rank conjecture shed light on several important prior results: in particular, the positive results of Lovett~\cite{Lov16} (i.e., \cref{thm:lovett-sqrt-rank}), which in our incidence-geometric language is a complete bipartite subgraph density lower bound of $2^{-\tilde{O}(\sqrt d)}$ for parallel 2-partitioned configurations, and the constructions of \cite{Lov16,Lov21,Pal21,FW21} of (unstructured) configurations with incidence density $\Omega(1)$ and complete bipartite subgraph density $2^{-\Omega(\sqrt d)}$. For context, \cite{Lov16}'s analysis relies heavily on the \emph{binarity} of the matrix; roughly, monochromatic rectangles are created by using a hyperplane rounding argument that exploits the gap between the two possible values for entries in the matrix. Altogether we are left in the following unsettling situation: The only way we know to prove a $2^{-O(\sqrt d)}$ bound uses binarity, but we believe that we should be able to get (1) a $2^{-{\polylog(d)}}$ bound using binarity (i.e., \cref{conj:original-conj}) \emph{and} (2) a $2^{-O(\sqrt d)}$ bound without using structural assumptions (i.e., \cref{conj:general-conj}).

This situation is especially interesting in light of the reduction used to prove \cref{thm:k-reduction}. According to this reduction, if we have a $2^{-f(d)}$ complete bipartite subgraph density lower bound for parallel $(k-1)$-partitioned configurations, then we also have a $2^{-(f(d))^2}$ bound for parallel $k$-configurations. In particular, since the best bound we know for parallel $2$-partitioned configurations is $2^{-\tilde{O}(\sqrt d)}$ (i.e., \cref{thm:lovett-sqrt-rank}), the best bound we know for parallel $3$-partitioned configurations is $2^{-\tilde{O}(d)}$ --- which simply recovers what we already proved in \cref{thm:probabilistic-lower-bound}! Indeed, even a modest improvement in the bounds for parallel $2$-partitioned configurations would yield a nontrivial bound for parallel $3$-partitioned configurations. While this is potentially due to a technical weakness of the reduction, it may still help explain the difficulty in surpassing the ``$\sqrt d$ barrier''.

\section{Incidence-geometric reformulations of the log-rank conjecture}\label{sec:conjectures}

In this section we present some reformulations of the log-rank conjecture in terms of incidence geometric questions. The conjectures start with some unpublished work of Golovnev, Meka, Sudan and Velusamy~\cite{GMSV19} who raised \cref{conj:original-conj} below explicitly and also went on to propose a stronger form of \cref{conj:general-conj} (which was~\cite[Conjecture 5]{SS21-early-version}). The latter turns out to be false (and this was already known --- see~\cite{Lov16} and \cref{sec:set-system-upper-bound}), so we propose several new variants here and prove some equivalences.

\subsection{The ``original'' reformulation}\label{sec:original-relax}

To begin, we introduce new notions of structured point-hyperplane configurations.

\begin{definition}[Parallel $k$-partition]\label{defn:parallel-partition}
Let $(\CP,\CH)$ be a point-hyperplane configuration. A \emph{parallel $k$-partition} for $(\CP,\CH)$ is a partition of $\CH$ into disjoint blocks $\CH_1 \sqcup \cdots \sqcup \CH_\ell$, each of size at most $k$, such that (1) within each block $\CH_i$, the hyperplanes all have the same normal vector, and (2) for each block $\CH_i$ and point $p \in \CP$, $p$ is incident to one of the hyperplanes of $\CH_i$.
\end{definition}

Note that in a parallel $k$-partitioned configuration, every point is incident to \emph{precisely} one hyperplane in each block.

Next, we define properties of matrices which we will soon show are analogous to parallel $k$-partitionability:

\begin{definition}[$k$-listability and $k$-arity]
A matrix $M\in\BR^{n \times m}$ is \emph{$k$-listable} if every column of $M$ contains at most $k$ distinct entries. Moreover, $M$ is \emph{$k$-ary} if it contains at most $k$ \emph{total} distinct entries.
\end{definition}

Note that $1$-listability is equivalent to every column being constant, and that $k$-arity implies $k$-listability.

Next, we describe a natural correspondence between parallel $k$-partitioned configurations and $k$-listable matrices. Specifically, we define a matrix associated with every configuration, and conversely, a configuration associated with every matrix.

Given a configuration $(\CP,\CH)$ of $n$ points and $m$ hyperplanes in $\BR^d$ with a parallel $k$-partition $\CH_1 \sqcup \cdots \sqcup \CH_\ell$, let $\Mat(\CP,\CH)$ be the $n \times m$ matrix defined as follows: Let $p_i$ be the $i$-th point in $\CP$ and let $a_j$ be the normal vector corresponding to the hyperplanes in block $\CH_j$. Then the $(i,j)$-th entry of $\Mat(\CP,\CH)$ is $\langle p_i,a_j \rangle$.

Now we describe how to define a configuration $\Con(M)$ associated with a matrix $M \in \BR^{n \times m}$ of rank $d$. Consider a factorization $M = PQ$ where
$P \in \BR^{n \times d}$ and $Q \in \BR^{d \times m}$. ($\Con(M)$ may depend on the choice of this factorization, but picking an arbitrary one suffices for our purposes.) Let $p_1,\ldots,p_n \in \BR^d$ denote the rows of $P$ and let $q_1,\ldots,q_m \in \BR^d$ denote the columns of $Q$.  For $j \in [m]$ let  $B_j$ denote the set of distinct entries in column $j$ of $M$. For each $j \in [m]$ and $b \in B_j$, define $h_j^b$ as the hyperplane determined by the equation $\langle x, q_j\rangle = b$ over $x \in \BR^d$. We define the configuration \[ \Con(M) := (\{p_i : i \in [n]\}, \{ h_j^b : j \in [m], b \in B_j \}). \]

The basic facts about the correspondence between matrices and configurations are summarized in \cref{prop:mat-con-corr} below. Given a set $\CH$ of hyperplanes, define the \emph{offset set} $\CB(\CH) \subseteq \BR$ as $\CB(\CH) := \{b(h) : h \in \CH\}$.

\begin{proposition}\label{prop:mat-con-corr}
    \begin{enumerate}
        \item If $(\CP,\CH)$ is a parallel $k$-partitioned configuration in $\BR^d$, then $\Mat(\CP,\CH)$ is $k$-listable and $|\CB(\CH)|$-ary and has rank $\leq d$.
        \item If a matrix $M \in \BR^{n \times m}$ is $k$-listable and $\ell$-ary, then $\Con(M)$ has a parallel $k$-partition, $|\CB(\Con(M))| \leq \ell$, and $\Con(M)$ contains between $m$ and $mk$ hyperplanes.
        \item Every $\ell$-ary matrix $M \in \BR^{n \times m}$ has a $1$-listable submatrix of size at least $\rs(\Con(\CP,\CH))$ (and hence a monochromatic rectangle of size at least $\rs(\CP,\CH)/\ell$).
        \item If $(\CP,\CH)$ is a parallel $k$-partitioned configuration, and $\Mat(\CP,\CH)$ contains a monochromatic rectangle of size $t$, then $\rs(\CP,\CH) \geq t$.
    \end{enumerate}
\end{proposition}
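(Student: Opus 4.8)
The plan is to verify the four items in turn, each by unwinding the definitions and repeatedly invoking one arithmetic fact: for $M = \Mat(\CP,\CH)$ the $(i,j)$ entry is $\langle p_i, a_j\rangle$, and for $M = PQ$ with $\Con(M)$ built from this factorization the $(i,j)$ entry is $\langle p_i, q_j\rangle$ --- where $p_i$ is the $i$-th point (a row of $P$) and $a_j$, $q_j$ the normal vector attached to the $j$-th block/column.

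For (1), writing $\Mat(\CP,\CH) = P A^\top$ with $P \in \BR^{n \times d}$ having rows $p_i$ and $A \in \BR^{m \times d}$ having rows $a_j$ exhibits rank $\le d$. For $k$-listability, fix a column $j$: by the covering clause of \cref{defn:parallel-partition} each $p_i$ lies on a unique hyperplane of the block $\CH_j$, whose offset is exactly $\langle p_i, a_j\rangle$, so the entries of column $j$ all lie in the offset set of $\CH_j$, which has size $\le k$; taking the union over $j$ shows every entry lies in $\CB(\CH)$, giving $|\CB(\CH)|$-arity. For (2), the blocks $\CH_j := \{h_j^b : b \in B_j\}$ (all with normal $q_j$) satisfy clause (1) of \cref{defn:parallel-partition} trivially, and clause (2) because $\langle p_i, q_j\rangle = M_{ij} \in B_j$ means $p_i$ lies on $h_j^{M_{ij}} \in \CH_j$; each $|B_j| \le k$ by $k$-listability, $\bigcup_j B_j$ is precisely the set of distinct entries of $M$ (size $\le \ell$), and the hyperplane count $\sum_j |B_j|$ lies in $[m, mk]$ since $1 \le |B_j| \le k$.

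Items (3) and (4) are the two directions of an ``incidence $\leftrightarrow$ rectangle'' dictionary, and the one observation that makes it work is: a \emph{nonempty} affine subspace $S \subseteq \BR^d$ is contained in at most one hyperplane with any given normal direction (distinct parallel hyperplanes are disjoint). Hence for (3), taking the optimal $S$ in the definition of $\rs(\Con(M))$, the hyperplanes of $\Con(M)$ containing $S$ are exactly $\{h_j^{b_j} : j \in \CB\}$ where $\CB$ is the set of columns on which $\langle \cdot, q_j\rangle$ is constant (with value $b_j$); writing $\CA$ for the indices of points on $S$, the submatrix $M|_{\CA \times \CB}$ has $M_{ij} = \langle p_i, q_j\rangle = b_j$ for every $i \in \CA$, i.e.\ it is constant down each column, hence $1$-listable of size $|\CA|\,|\CB| = \rs(\Con(M))$; since $M$ is $\ell$-ary, pigeonholing its $\le \ell$ distinct column-values yields a sub-rectangle of size $\ge \rs(\Con(M))/\ell$ that is monochromatic. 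Conversely for (4), a monochromatic rectangle $M|_{\CA \times \CB}$ of $\Mat(\CP,\CH)$ with value $c$ means $\langle p_i, a_j\rangle = c$ for all $i \in \CA$, $j \in \CB$; set $S := \bigcap_{j \in \CB} \{x : \langle x, a_j\rangle = c\}$. Every $p_i$ with $i \in \CA$ lies on $S$, so $S \ne \emptyset$; and fixing any $i_0 \in \CA$, for each $j \in \CB$ the unique hyperplane of $\CH_j$ incident to $p_{i_0}$ must be the one with offset $c$, so $\{x : \langle x, a_j\rangle = c\} \in \CH_j \subseteq \CH$ contains $S$. Thus $S$ witnesses $\rs(\CP,\CH) \ge |\CA|\,|\CB| = t$.

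There is no real obstacle --- the proposition is essentially bookkeeping --- but two points deserve care. The first is the ``unique parallel hyperplane through a nonempty $S$'' observation, which is exactly what lets one identify the set of hyperplanes of $\Con(M)$ (resp.\ $\CH$) containing $S$ with a set of \emph{columns}; when $\rs = 0$ the statements are vacuous, so assuming nonemptiness loses nothing. The second is that, for the lower bound ``$m$ hyperplanes'' in (2) and for the exact size $\rs(\Con(M))$ in (3) and (4), one should treat the hyperplanes of $\Con(M)$ as indexed by pairs $(j,b)$ and count with multiplicity, matching how $\Con$ is defined, so that distinct blocks that happen to share a normal direction do not cause an undercount. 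The pigeonhole loss of the factor $\ell$ in (3) is genuine but harmless in the intended applications, where $\ell = O(1)$.
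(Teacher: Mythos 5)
Your proof is correct. The paper gives no proof of this proposition, stating only that ``the proofs follow immediately from the definitions and so we omit them,'' so there is no paper argument to compare against; what you have written is precisely the unwinding-of-definitions that the authors leave implicit. The organizing observation you isolate --- that a nonempty affine subspace is contained in at most one hyperplane of any given normal direction, so that the hyperplanes of $\Con(M)$ (resp.\ of $\CH$) containing $S$ are parameterized by a set of \emph{columns} (resp.\ \emph{blocks}) --- is exactly what makes the incidence-to-rectangle dictionary in items (3) and (4) work, and in item (4) it is worth noting (as is implicit in your argument) that the hyperplanes $\{x:\langle x,a_j\rangle=c\}$ obtained for different $j\in\CB$ are distinct because they lie in pairwise disjoint blocks $\CH_j$, so $S$ really is contained in $|\CB|$ distinct members of $\CH$.

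Your closing remark about counting $\Con(M)$'s hyperplanes by index pairs $(j,b)$ is a legitimate fine point: the paper's set-builder notation $\{h_j^b : j\in[m], b\in B_j\}$ would in principle collapse coincident hyperplanes (e.g.\ when two columns of $Q$ are proportional), which could break the lower bound of $m$ hyperplanes in item (2). Treating them as formally indexed by $(j,b)$, or equivalently working with the multiset, is the natural reading under which the proposition holds exactly, and you are right that it costs nothing elsewhere.
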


The proofs follow immediately from the definitions and so we omit them.

We can use \cref{prop:mat-con-corr} to show that the following conjecture is equivalent to \cref{conj:log-rank}:

\begin{conjecture}[Parallel $2$-partitioned configurations have large complete bipartite subgraphs~\cite{GMSV19}]\label{conj:original-conj}
In $\BR^d$, let $(\CP,\CH)$ be a parallel $2$-partitioned configuration with $\CB(\CH) = \{0,1\}$. Then \[ \rs(\CP,\CH) \geq mn \cdot 2^{-{\polylog(d)}}. \]
\end{conjecture}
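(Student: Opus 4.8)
This statement is a \emph{conjecture}, and the paper itself establishes (via \cref{prop:mat-con-corr} together with \cref{thm:NW95}) that it is equivalent to the log-rank conjecture (\cref{conj:log-rank}); so an unconditional proof would resolve a central open problem in communication complexity, and I do not expect to supply one. What I \emph{can} do is lay out the reduction that turns the statement into the well-studied problem of finding large monochromatic rectangles in low-rank Boolean matrices, record the partial bound that current tools actually deliver, and isolate the single step where the difficulty lives.

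The plan for the reduction: given a parallel $2$-partitioned configuration $(\CP,\CH)$ in $\BR^d$ with $\CB(\CH)=\{0,1\}$, form $M := \Mat(\CP,\CH) \in \BR^{n \times m}$. Because every point lies on exactly one hyperplane of each parallel pair $\{h_j^0,h_j^1\}$ (offsets $0$ and $1$), the $(i,j)$ entry $\langle p_i,a_j\rangle$ is $0$ or $1$, so $M$ is a genuine Boolean matrix, and by part (1) of \cref{prop:mat-con-corr} it has $\rank(M)\le d$. By part (4) of the same proposition, any monochromatic rectangle of size $t$ in $M$ yields $\rs(\CP,\CH)\ge t$. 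Hence it suffices to prove: every rank-$d$ Boolean matrix contains a monochromatic rectangle of fractional size $2^{-\polylog(d)}$. With the tools already in the excerpt one gets a weaker exponent immediately: \cref{thm:lovett-sqrt-rank} gives $\CC_{det}(M)\le O(\sqrt d\,\log d)$, hence a partition of $\CX\times\CY$ into $2^{O(\sqrt d\,\log d)}$ monochromatic rectangles, the largest of relative size $\ge 2^{-O(\sqrt d\,\log d)}$; pulling this back through the reduction proves the conjectured inequality with $\tilde O(\sqrt d)$ in place of $\polylog(d)$. (This is exactly the $2^{-\tilde O(\sqrt d)}$ bound for parallel $2$-partitioned configurations referred to in the discussion following \cref{thm:k-reduction}.)

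The gap between $\tilde O(\sqrt d)$ and $\polylog(d)$ is the whole content, and I expect it to be the main obstacle by a wide margin --- steps as above are bookkeeping. Geometrically, the naive attack is to pick a random subset $T$ of the $\ell$ parallel pairs, choose one hyperplane from each pair in $T$, intersect them to get a flat of codimension $\le|T|$, and hope many points survive; but a uniformly random choice kills a $(1-2^{-|T|})$ fraction of points in expectation, and balancing this against the need for $S$ to lie on many hyperplanes caps the usable codimension at $\Theta(\sqrt d)$. Lovett's hyperplane-rounding/discrepancy argument is the sharpened form of this idea and also stalls there, exploiting only the single gap between the values $0$ and $1$. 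To close the conjecture one would need one of: (a) a boosting of Lovett-style discrepancy bounds for \emph{low-rank} Boolean matrices that produces a near-monochromatic rectangle of relative size $2^{-\polylog(d)}$; (b) an approximate-rank or spectral argument that uses binarity more aggressively than a single value gap; or (c) a dimension-reduction theorem stating that a rank-$d$ Boolean matrix restricts, on some rectangle of relative size $2^{-\polylog(d)}$, to a matrix of rank $d^{1-\Omega(1)}$, which would then close the conjecture by induction on $d$ (and would in fact resolve log-rank). Any of these would plug directly into the reduction above; producing a monochromatic rectangle of relative size beating $2^{-\sqrt d}$ is precisely the barrier.
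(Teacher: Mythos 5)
You have correctly recognized that this statement is a conjecture that the paper does not (and cannot) prove unconditionally: the paper only states it and then proves it equivalent to the log-rank conjecture. Your reduction via $\Mat(\CP,\CH)$ (Booleanness from the offsets $\{0,1\}$ and the exactly-one-incidence-per-block property, rank at most $d$, monochromatic rectangles pulling back to complete bipartite subgraphs) is essentially the paper's own equivalence argument, and your identification of the $2^{-\tilde O(\sqrt d)}$ partial bound from Lovett's theorem as the current state of the art matches the paper's discussion, so there is nothing to fault here.
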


\begin{theorem}
The log-rank conjecture (\cref{conj:log-rank}) holds if and only if \cref{conj:original-conj} does.
\end{theorem}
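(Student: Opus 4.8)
The plan is to combine the matrix--configuration dictionary of \cref{prop:mat-con-corr} with the Nisan--Wigderson amplification theorem (\cref{thm:NW95}), proving the two implications separately.

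\textbf{Log-rank conjecture $\Rightarrow$ \cref{conj:original-conj}.} Given a parallel $2$-partitioned configuration $(\CP,\CH)$ in $\BR^d$ with $\CB(\CH)=\{0,1\}$, where $n=|\CP|$ and $m=|\CH|$, set $M := \Mat(\CP,\CH)$. By \cref{prop:mat-con-corr}(1), $M$ is an $n\times m$ matrix of rank at most $d$ whose entries are offsets drawn from $\{0,1\}$; that is, $M = M_f$ for a Boolean $f$ with $\rank(f)\le d$. Applying the log-rank conjecture to $f$ yields a protocol of cost $c \le \polylog(\rank(f)) \le \polylog(d)$, which partitions $M$ into at most $2^c$ monochromatic rectangles; the largest has size at least $mn\cdot 2^{-c} \ge mn\cdot 2^{-\polylog(d)}$. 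Feeding this monochromatic rectangle of $\Mat(\CP,\CH)$ into \cref{prop:mat-con-corr}(4) gives $\rs(\CP,\CH) \ge mn\cdot 2^{-\polylog(d)}$, which is the conclusion of \cref{conj:original-conj}. (If $M_f$ is constant, \cref{prop:mat-con-corr}(4) directly gives $\rs(\CP,\CH)\ge mn$.)

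\textbf{\cref{conj:original-conj} $\Rightarrow$ log-rank conjecture.} By \cref{thm:NW95}, it suffices to verify its hypothesis with $\gamma(d) = \polylog(d)$: every Boolean $f:\CX\times\CY\to\{0,1\}$ contains a monochromatic rectangle of size at least $|\CX||\CY|\cdot 2^{-\gamma(\rank(f))}$. This is immediate if $M_f$ is constant, so assume $M_f$ is a non-constant $n\times m$ Boolean matrix of rank $d:=\rank(f)$, hence $2$-ary. By \cref{prop:mat-con-corr}(2), $\Con(M_f)$ is a parallel $2$-partitioned configuration in $\BR^d$ with $m'\in[m,2m]$ hyperplanes, and its offset set is the set of distinct values appearing in $M_f$, namely $\{0,1\}$. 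Applying \cref{conj:original-conj} gives $\rs(\Con(M_f)) \ge m'n\cdot 2^{-\polylog(d)} \ge mn\cdot 2^{-\polylog(d)}$, and \cref{prop:mat-con-corr}(3) (with $\ell = 2$) converts this into a monochromatic rectangle of $M_f$ of size at least $\rs(\Con(M_f))/2 \ge mn\cdot 2^{-\polylog(d)}$, as required. Finally, with $\gamma(d)=\polylog(d)$ the conclusion of \cref{thm:NW95} reads $\CC_{det}(f) \le O\left(\log^2(\rank(f)) + \sum_{i=0}^{\log\rank(f)}\polylog(\rank(f)/2^i)\right)$, a sum of $O(\log\rank(f))$ terms each $\polylog(\rank(f))$, hence $\CC_{det}(f)\le\polylog(\rank(f))$.

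There is no serious obstacle once \cref{prop:mat-con-corr} and \cref{thm:NW95} are in hand; the proof is a short chain of reductions. The only points requiring attention are verifying that the offset set of $\Con(M_f)$ is \emph{precisely} $\{0,1\}$ so that \cref{conj:original-conj} applies as stated (hence the non-constancy hypothesis, with the constant case handled separately), and tracking the two constant-factor losses (the factor $2$ from $2$-arity in \cref{prop:mat-con-corr}(3) and the factor $2$ from $m'\le 2m$), neither of which affects a $2^{-\polylog(d)}$ bound.
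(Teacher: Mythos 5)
Your proof is correct and follows essentially the same route as the paper: translating between configurations and matrices via \cref{prop:mat-con-corr} and invoking \cref{thm:NW95} to amplify a single large monochromatic rectangle into a full protocol. The only divergence is in the forward direction, where you exploit the hypothesis $\CB(\CH)=\{0,1\}$ to conclude $\Mat(\CP,\CH)$ is already Boolean, whereas the paper (somewhat more generally) treats it as an arbitrary binary matrix and applies the affine normalization $\tilde M = (\Mat(\CP,\CH)-a)/(b-a)$ first; your version is a valid shortcut, and your explicit handling of the constant-matrix edge case is a small bonus the paper leaves implicit.
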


\begin{proof}
($\Longrightarrow$) Given any parallel $2$-partitioned configuration $(\CP,\CH)$ in $\BR^d$, we may assemble the matrix $\Mat(\CP,\CH)$. By \cref{prop:mat-con-corr}, $\rank(\Mat(\CP,\CH)) \leq d$ and $\Mat(\CP,\CH)$ is binary, i.e., $\Mat(\CP,\CH)$ has two distinct entries, $a$ and $b$. Letting $\tilde M := (\Mat(\CP,\CH)-a)/(b-a)$, we have $\rank(\tilde M) \leq \rank(\Mat(\CP,\CH)) +1$. $\tilde M$ is Boolean, so assuming the log-rank conjecture, it contains a monochromatic rectangle of size at least $|\CP| |\CH| \cdot 2^{-{\polylog(d)}}$ (see the discussion above \cref{thm:NW95}). Hence so does $\Mat(\CP,\CH)$, so by \cref{prop:mat-con-corr}, $\rs(\CP,\CH) \geq |\CP| |\CH| \cdot 2^{-{\polylog(d)}}$. ($\Longleftarrow$) Given any Boolean matrix $M \in \{0,1\}^{n \times m}$ of rank at most $d$, we may form the configuration $\Con(\CP,\CH)$ in $\BR^d$, which has by \cref{prop:mat-con-corr} a parallel 2-partition and at least $m$ hyperplanes. Assuming \cref{conj:original-conj}, $\rs(\Con(\CP,\CH)) \geq mn \cdot 2^{-{\polylog(d)}}$. Hence by \cref{prop:mat-con-corr} again, $M$ contains a monochromatic rectangle of size at least $mn \cdot 2^{-{\polylog(d)}-1}$, which suffices by \cref{thm:NW95} to prove the log-rank conjecture.
\end{proof}

\subsection{Relaxations of {\cref{conj:original-conj}} }

We could hope to relax the hypothesis of \cref{conj:original-conj} to only require a parallel partition of constant size:

\begin{conjecture}[Parallel partitioned configurations have large complete bipartite subgraphs]\label{conj:constant-sized-parallel-partitions}
The following is true for every fixed integer $k > 1$. In $\BR^d$, let $(\CP,\CH)$ be a configuration with a parallel $k$-partition. Then \[ \rs(\CP,\CH) \geq mn \cdot 2^{-{\polylog(d)}}. \] Equivalently, by \cref{prop:mat-con-corr}, every $k$-listable matrix $M \in \BR^{n \times m}$ contains a $1$-listable submatrix of size at least $mn \cdot 2^{-{\polylog(\rank(M))}}$.
\end{conjecture}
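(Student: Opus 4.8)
The plan is to prove the conjecture by induction on the fixed constant $k$, at each step converting a bound for parallel $k$-partitioned configurations into a (quantitatively weaker) consequence of the bound for parallel $(k-1)$-partitioned ones. The ``Equivalently'' clause comes for free from \cref{prop:mat-con-corr}: a parallel $k$-partitioned configuration $(\CP,\CH)$ in $\BR^d$ produces a $k$-listable matrix $\Mat(\CP,\CH)$ of rank at most $d$ whose monochromatic rectangles (i.e.\ $1$-listable submatrices) are complete bipartite subgraphs of $\Gr(\CP,\CH)$, while conversely $\Con(M)$ of a $k$-listable rank-$d$ matrix $M$ is parallel $k$-partitioned and has only $\Theta(m)$ hyperplanes; so the geometric and linear-algebraic statements are interchangeable up to $\Theta(1)$ losses that vanish inside the $\polylog$, and it suffices to prove the matrix form: every $k$-listable matrix of rank $d$ has a $1$-listable submatrix of density $2^{-\polylog(d)}$. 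The base case $k = 1$ is vacuous.

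For the inductive step, fix a $k$-listable $M \in \BR^{n\times m}$ of rank $d$, and in each column $j$ pick a most popular entry $b_j$, occurring in at least $n/k$ rows. The goal is a dichotomy: either the ``popular'' entries concentrate on a large monochromatic rectangle (already $1$-listable, done), or one can pass to a large submatrix $M|_{A\times B}$ on which $b_j$ is absent from every surviving column $j \in B$ --- this submatrix is $(k-1)$-listable and of rank at most $d$, so the induction hypothesis applies. Executing this dichotomy is essentially the content of \cref{thm:k-reduction}; since it invokes the $(k-1)$-bound essentially twice (once to select the rows and columns that define the peeled-off submatrix, once inside the recursion), a density-$2^{-f(d)}$ bound for the $(k-1)$-case yields only a density-$2^{-f(d)^2}$ bound for the $k$-case. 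Because $k$ is a fixed constant, iterating $k-1$ times from a $\polylog$ base bound keeps the exponent at $(\polylog d)^{2^{k-2}} = \polylog(d)$; the only things to check are that the value sets varying across columns (the difference between $k$-listable and $k$-ary) and the constant-factor losses in $|A|,|B|$ are harmless.

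The hard part --- in fact essentially the whole difficulty --- is the base case $k = 2$, which asserts that every binary rank-$d$ matrix has a monochromatic rectangle of density $2^{-\polylog(d)}$: this is \cref{conj:original-conj}, shown above to be equivalent to the log-rank conjecture and hence open. The strongest input currently available to the reduction is Lovett's bound (\cref{thm:lovett-sqrt-rank}), giving density $2^{-\tilde O(\sqrt d)}$ at $k = 2$; pushing it through the squaring step yields only $2^{-\tilde O(d)}$ at $k = 3$, which is no improvement over the generic bound of \cref{thm:probabilistic-lower-bound}. Thus the reduction cleanly localizes the problem to one notorious case but, via this route, does not soften it --- any unconditional proof of the conjecture for any $k \ge 2$ must cross the ``$\sqrt d$ barrier,'' and the squaring loss is exactly what prevents small gains at $k = 2$ from propagating upward.
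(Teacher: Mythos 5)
Your overall architecture matches the paper's: this statement is a conjecture, and the paper establishes it only conditionally, via a reduction from parallel $k$-partitions to parallel $(k-1)$-partitions that bottoms out at $k=2$, which is \cref{conj:original-conj} and hence the open log-rank conjecture. You correctly identify that localization, the harmlessness of the constant-factor losses in translating between $\Mat$ and $\Con$, and the reason the iterated loss stays polylogarithmic for fixed $k$.

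However, the mechanism you give for the inductive step is not the one in \cref{thm:k-reduction}, and as stated it has a genuine gap. You propose picking a most popular entry $b_j$ in each column and then either finding a large monochromatic rectangle among the popular entries or passing to a large submatrix $M|_{A\times B}$ from which every $b_j$ is absent. Neither horn of this dichotomy is justified: the set of rows avoiding $b_j$ has density at most $1-1/k$ and varies with $j$, so its intersection over a large set of columns can easily be empty, and nothing forces the popular entries to concentrate on a rectangle either. Attributing this dichotomy to \cref{thm:k-reduction} is a mischaracterization; what the paper actually does is purely algebraic. After an affine renormalization making every column contain both $0$ and $1$ (costing only $+1$ in rank), it applies the entrywise polynomial $z\mapsto z(z-1)$, which merges the values $0$ and $1$ into the single value $0$: the resulting matrix $\tilde M$ is $(k-1)$-listable on \emph{all} of $[n]\times[m]$, and by \cref{lemma:poly-rank} has rank at most $d^2+d$. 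The induction hypothesis applied to $\tilde M$ yields a $1$-listable submatrix on which $M$ is $2$-listable (each entry is a root of a fixed quadratic per column), and the $k=2$ case finishes. Note also that the squaring loss $f(d)\rightsquigarrow f(d)^2$ does not come from ``invoking the $(k-1)$-bound twice'' --- that would cost only a factor of $2$ in the exponent --- but from the rank blow-up $d\mapsto d^2+d$ under the polynomial map (for $f(d)=\sqrt d$ one has $f(d^2)=f(d)^2$). You would need to replace your dichotomy with this value-merging step, or supply an actual proof of the dichotomy, for the inductive step to go through.
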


In \cref{thm:k-reduction} below, we prove that \cref{conj:constant-sized-parallel-partitions} is actually equivalent to \cref{conj:original-conj} (and thus to the log-rank conjecture). We could also relax the \emph{parallel} requirement of partition:

\begin{conjecture}[Partitioned configurations have large complete bipartite subgraphs]\label{conj:constant-sized-partitions}
The following is true for every fixed integer $k > 1$. In $\BR^d$, let $(\CP,\CH)$ be a configuration with a (not-necessarily-parallel) $k$-partition, i.e., such that $\CH$ can be partitioned into \emph{blocks} $\CH_1 \sqcup \cdots \sqcup \CH_\ell$ of size at most $k$ such that for each block $\CH_i$ and point $p \in \CP$, $p$ is incident to \emph{at least} one of the hyperplanes of $\CH_i$. Then \[ \rs(\CP,\CH) \geq mn \cdot 2^{-{\polylog(d)}}. \]
\end{conjecture}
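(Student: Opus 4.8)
Unlike \cref{conj:constant-sized-parallel-partitions}, this conjecture is not known to follow from the log-rank conjecture, so any proof must do something genuinely new; the most natural route is to mimic the reduction behind \cref{thm:k-reduction} --- peel off one hyperplane per block, reducing the (non-parallel) $k$-partitioned case to the $(k-1)$-partitioned case --- and arrange the base case $k=2$ to reduce to \cref{conj:original-conj}. The plan is thus an induction on $k$. Fix a $k$-partition $\CH = \CH_1 \sqcup \cdots \sqcup \CH_\ell$ with each $|\CH_i| \leq k$. A single inductive step should do one of two things: either find a large subset $\CP' \subseteq \CP$ such that, after deleting one (well-chosen) hyperplane from each block, the surviving hyperplanes still cover $\CP'$ --- yielding a $(k-1)$-partitioned sub-configuration on which to recurse --- or else conclude directly that a single hyperplane from many blocks is incident to many common points, which already produces a large complete bipartite subgraph. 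As in the parallel case, one would hope that a weighting/pigeonhole argument over the $\leq k$ ``slots'' of each block lets us branch on which slot a typical point uses, losing only a factor polynomial in the current parameter at each of the $O(\log(\cdot))$ levels of the recursion.

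The base case $k=2$ already exposes the main obstacle. There each block is a pair $\{h_i, h_i'\}$ whose union contains $\CP$; writing $\ell_i(p) := \langle a_i, p\rangle - b(h_i)$ and $\ell_i'(p) := \langle a_i', p\rangle - b(h_i')$ for the defining affine forms, the matrices $M'_{p,i} := \ell_i(p)$ and $M''_{p,i} := \ell_i'(p)$ each have rank $\leq d+1$ and satisfy $M' \odot M'' = 0$ entrywise (here $\odot$ is the Hadamard product), since every point lies on $h_i$ or on $h_i'$. So \cref{conj:constant-sized-partitions} for $k = 2$ would follow from the statement: \emph{if $M', M''$ are real matrices with $M' \odot M'' = 0$ and $\rank(M'), \rank(M'') \leq d$, then the zero-set of $M'$ or of $M''$ contains an all-zero submatrix of fractional size $2^{-\polylog(d)}$}. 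Indeed, an all-zero submatrix $\CA \times \CB$ of $M'$ means every point of $\CA$ lies on the affine flat $\bigcap_{i \in \CB} h_i$, so $\rs(\CP,\CH)$ is correspondingly large. One of the two zero-sets has density at least $\tfrac12$, so this is exactly the assertion that a low-rank matrix with half its entries zero must have a large all-zero submatrix --- a ``sparse low-rank'' phenomenon that, like \cref{conj:log-rank} itself, we do not know how to establish. Crucially, there is no obvious reduction to \cref{conj:log-rank}: the natural Boolean encoding --- which hyperplane of block $i$ a given point $p$ lies on --- is an equality indicator that can itself have full rank, so the log-rank conjecture does not apply, and the unconditional bound of \cref{thm:probabilistic-lower-bound} gives only fractional size $2^{-O(d)}$.

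An alternative is to linearize the covering condition by lifting: replace each $p \in \BR^d$ by its degree-$k$ Veronese image $\psi(p) \in \BR^D$ with $D = O(d^k) = \poly(d)$ for fixed $k$, so that the block polynomial $\prod_{h \in \CH_i}\bigl(\langle a_h, \cdot\rangle - b(h)\bigr)$, of degree $\leq k$, becomes an affine functional $L_i$ on $\BR^D$ that vanishes on all of $\psi(\CP)$. Pairing each hyperplane $\{L_i = 0\}$ with a dummy parallel hyperplane $\{L_i = 1\}$ that contains no point yields a genuine \emph{parallel} $2$-partition of the configuration $(\psi(\CP), \{L_i = 0,\ L_i = 1\}_i)$ in dimension $\poly(d)$, to which \cref{conj:original-conj} applies and gives a complete bipartite subgraph of fractional size $2^{-\polylog(D)} = 2^{-\polylog(d)}$. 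The catch --- and the reason this does not already prove the conjecture --- is that $\psi$ is nonlinear: a complete bipartite subgraph of the lifted configuration is a set of $\psi(p)$'s lying on an affine subspace of $\BR^D$, which pulls back to a bounded-degree variety rather than an affine subspace of $\BR^d$, and the relation ``$\psi(p) \in \{L_i = 0\}$'' only says $p$ lies on \emph{some} hyperplane of block $i$, not on a common one. Reconciling the two pictures --- extracting genuine affine/combinatorial structure from either the Hadamard-product formulation or the Veronese lift --- is the crux, and I expect it to be the main difficulty.
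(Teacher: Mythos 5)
You are addressing a \emph{conjecture}: the paper offers no proof of \cref{conj:constant-sized-partitions}, and explicitly records that it cannot even derive it from \cref{conj:constant-sized-parallel-partitions} (hence not from the log-rank conjecture). You correctly recognize this and do not claim a proof, so there is no argument of the paper's to compare against; what follows is an assessment of your proposed strategy.

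The concrete problem is in your base case $k=2$. The setup is sound: the pair $(M',M'')$ indeed has $\rank(M'),\rank(M'')\leq d+1$ and $M'\odot M''=0$, and an all-zero submatrix of $M'$ does yield a complete bipartite subgraph of the incidence graph. But you then relax the needed statement to ``a low-rank matrix with half its entries zero must have an all-zero submatrix of fractional size $2^{-\polylog(d)}$,'' and that statement is not merely open --- it is \emph{false}, by the paper's own \cref{thm:lattice-upper-bound-from-sets-palv} and the constructions of Lovett and Fox--Wigderson discussed in \cref{sec:set-system-upper-bound}: the intersection-size matrix $\Mat(\CA,\CB)$ has rank at most $d$, a constant fraction of zero entries (which can be made at least $1/2$ by taking $\alpha$ large, since the zero density tends to $e^{-1/\alpha}$), and no all-zero submatrix of density better than $2^{-\Omega(\sqrt{d})}$. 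So any viable argument for $k=2$ must exploit the full structure of the pair --- that \emph{both} matrices are low-rank and their supports are complementary --- rather than discarding one of them after a pigeonhole step; the intermediate statement you isolate as ``exactly'' what is needed cannot be the route. Your Veronese-lift alternative is, as you essentially concede, vacuous as stated: every lifted point lies on every hyperplane $\{L_i=0\}$, so the lifted incidence graph is already complete and \cref{conj:original-conj} yields no information about the original configuration. Likewise, the inductive ``peel one hyperplane per block'' step has no analogue of the polynomial trick $M_{ij}(M_{ij}-1)$ from \cref{thm:k-reduction}, precisely because the blocks are not parallel, so there is no single low-rank matrix recording which hyperplane of a block each point uses. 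In short, your diagnosis of where the difficulty lies is largely accurate, but the one concrete sufficient condition you propose is refuted elsewhere in the paper, and the proposal contains no viable path to a proof.
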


We are currently unable to show that \cref{conj:constant-sized-partitions} is implied by \cref{conj:constant-sized-parallel-partitions}.

\subsection{Equivalence of \cref{conj:original-conj} and \cref{conj:constant-sized-parallel-partitions}}\label{sec:relaxation-proofs}

In this section, we show that \cref{conj:constant-sized-parallel-partitions} is implied by \cref{conj:original-conj}.

\begin{lemma}[Folklore]\label{lemma:poly-rank}
If $M \in \BR^{n \times m}$ has rank $d$, and $p\in \BR[X]$ is a real polynomial, then the matrix $N$ given by $N_{ij} = p(M_{ij})$ for every $(i,j) \in [n] \times [m]$ has rank at most $\sum_{c \in S(p)} d^c$, where $S(p) := \{c \geq 0 : p \text{ contains a nonzero monomial of degree }c\}$.
\end{lemma}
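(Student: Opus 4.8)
The plan is to reduce to the case of monomials and then invoke the standard fact that Hadamard (entrywise) powers of a rank-$d$ matrix have rank at most a power of $d$. First I would write $p(X) = \sum_{c \in S(p)} a_c X^c$ with each $a_c \neq 0$, so that $N = \sum_{c \in S(p)} a_c \, M^{\circ c}$, where $M^{\circ c}$ denotes the entrywise $c$-th power of $M$ (and $M^{\circ 0}$ is the all-ones matrix). Since rank is subadditive, $\rank(N) \le \sum_{c \in S(p)} \rank(a_c M^{\circ c}) = \sum_{c \in S(p)} \rank(M^{\circ c})$, so it suffices to prove that $\rank(M^{\circ c}) \le d^c$ for every $c \ge 0$.

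For this Hadamard-power bound I would fix a rank factorization $M = U V^\top$ with $U \in \BR^{n \times d}$ and $V \in \BR^{m \times d}$, and write $u_i, v_j \in \BR^d$ for the rows of $U$ and $V$, so that $M_{ij} = \langle u_i, v_j \rangle$. Then $(M^{\circ c})_{ij} = \langle u_i, v_j \rangle^c = \langle u_i^{\otimes c}, v_j^{\otimes c} \rangle$, where $u^{\otimes c} \in \BR^{d^c}$ is the $c$-fold tensor power (the standard identity $\langle a,b\rangle^c = \langle a^{\otimes c}, b^{\otimes c}\rangle$). Collecting the vectors $u_i^{\otimes c}$ as the rows of a matrix $\tilde U \in \BR^{n \times d^c}$, and the $v_j^{\otimes c}$ as the rows of $\tilde V \in \BR^{m \times d^c}$, we get $M^{\circ c} = \tilde U \tilde V^\top$, so $\rank(M^{\circ c}) \le d^c$. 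Plugging this into the bound from the first paragraph yields $\rank(N) \le \sum_{c \in S(p)} d^c$.

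There is no real obstacle here — the statement is folklore, as the lemma's name indicates — but the one point that warrants care is the bookkeeping of the constant term: one should check that the $c = 0$ case is handled consistently, i.e.\ the all-ones matrix has rank $1 = d^0$ (equivalently $u^{\otimes 0}$ is the scalar $1$), which matches the convention in the statement that $S(p)$ includes $c = 0$ when $p$ has a nonzero constant term. A secondary minor point is simply to invoke subadditivity of rank ($\rank(A+B) \le \rank(A) + \rank(B)$) correctly across all the summands.
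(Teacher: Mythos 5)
Your proof is correct and follows essentially the same route as the paper's: reduce via subadditivity of rank to bounding $\rank(M^{\circ c}) \le d^c$, and prove that bound by observing $\langle u_i, v_j\rangle^c = \langle u_i^{\otimes c}, v_j^{\otimes c}\rangle$, which exhibits $M^{\circ c}$ as a product of $n\times d^c$ and $d^c\times m$ matrices. Your extra remark about the $c=0$ case is a nice point of care that the paper glosses over.
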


\begin{proof}
Recall that rank is subadditive: If $A$ and $B$ are matrices, then $\rank(A+B) \leq \rank(A) + \rank(B)$. Hence it suffices to show that for every $c$, the matrix $N$ given by $N_{ij} = M_{ij}^c$ has rank at most $d^c$.

If $M$ has rank $d$, we can write $M = PQ$ for some $P \in \BR^{n \times d}, Q \in \BR^{d \times m}$; let $p_i$ and $q_j$ denote the $i$-th row of $P$ and the $j$-th column of $Q$, respectively. We have $M_{ij} = \langle p_i,q_j \rangle$ by definition. Then let $p'_i := p_i^{\otimes c}$, i.e., the $c$-fold self-Kronecker product of $p_i$, which is the $d^c$-dimensional vector whose entries correspond to products of each possible sequence of $c$ elements of $p_i$. Similarly, let $q'_j := q_j^{\otimes c}$. Hence we have \[ N_{ij} = M_{ij}^c = \langle p_i,q_j \rangle^c = \left(\sum_{k=1}^d p_{i,k} q_{j,k}\right)^c = \sum_{k_1,\ldots,k_c \in [d]} p_{i,k_1} q_{j,k_1} \cdots p_{i,k_c} q_{j,k_c} = \langle p'_i, q'_j \rangle, \] where $p_{i,k}$ and $q_{j,k}$ denote the $k$-th entries of $p_i$ and $q_j$, respectively. Hence letting $P' \in \BR^{n \times d^c}$ be the matrix whose $i$-th row is $p'_i$ and $Q' \in \BR^{d^c \times m}$ be the matrix whose $j$-th column is $q'_j$, we have $N = P'Q'$, so $N$ has rank at most $d^c$.
\end{proof}

\begin{theorem}\label{thm:k-reduction}
If the log-rank conjecture holds (in the form of \cref{conj:original-conj}), then \cref{conj:constant-sized-parallel-partitions} holds. In particular, assuming \cref{conj:original-conj}, for all integers $k > 2$, there exists a polynomial $p_k$ such that the following is true: Every $k$-listable, rank-$d$ matrix $M \in \BR^{n \times m}$ has a 1-listable submatrix of size at least $mn \cdot 2^{- p_k(\log d)}$.
\end{theorem}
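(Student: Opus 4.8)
The plan is to prove the statement by induction on $k$, with the key being a reduction from $k$-listable matrices to $(k-1)$-listable matrices. The base case $k=2$ is exactly \cref{conj:original-conj} (equivalently the log-rank conjecture), which we are assuming.

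For the inductive step, let $M \in \BR^{n \times m}$ be $k$-listable of rank $d$. The idea is to ``collapse'' two of the (up to) $k$ possible values in each column down to one, producing a $(k-1)$-listable matrix, at the cost of a controlled blow-up in rank. The natural way to merge values is via a polynomial: for each column $j$, with its distinct-value set $B_j$ of size at most $k$, pick a polynomial $p$ that is injective on $B_j$ but... no---actually we want the opposite: we want a single polynomial $p$ (applied entrywise to all of $M$) such that each column now has at most $k-1$ distinct values. This is impossible globally (a polynomial identifying two values in one column may separate values in another), so instead the plan is to \emph{partition the columns} according to which pair of values should be merged. Concretely, refine the column set into at most $\binom{k}{2}$-many blocks? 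That is still not quite right because the value sets differ across columns.

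The correct approach, I believe, is the following. First, handle columns by a counting/pigeonhole step: by \cref{lemma:poly-rank}, applying the squaring map $x \mapsto x^2$ entrywise to $M$ produces a matrix of rank at most $d^2$ which is still $k$-listable; more usefully, for a suitable low-degree polynomial $p$ depending only on $k$, the entrywise image $p(M)$ has the property that within each column, $p$ is injective on $B_j$ (interpolate: there is a single universal polynomial of degree $O(k)$... again this fails since $B_j$ varies). Let me restructure: the cleanest route is to \emph{not} try to merge values column-by-column via one polynomial, but rather to observe that among the at most $k$ values in a given column, we may use the affine-normalization trick from the proof that \cref{conj:original-conj} $\Leftrightarrow$ log-rank, combined with \cref{lemma:poly-rank}, iteratively. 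Here is the plan in order: (1) On each column $j$, fix an arbitrary ``special'' value $\beta_j \in B_j$; (2) apply entrywise a single polynomial $q$ of degree $k-1$ whose effect, after the column-dependent affine renormalization $M_{ij} \mapsto (M_{ij} - \beta_j)/(\gamma_{ij})$, is to send $\beta_j \mapsto 0$ and identify \emph{two} of the remaining values---but since only the \emph{set} of distinct entries per column matters for listability and renormalization preserves rank up to $+1$ per column...

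Actually, the main obstacle, and where I expect the real work lies, is exactly this: a globally-applied polynomial cannot be tailored to each column's value set. The resolution that I expect the authors use (and that I would pursue) is a \emph{recursive partition of the column set}: Observe that two $k$-listable matrices can be ``stacked'' (their column sets concatenated) and a complete bipartite subgraph in the whole forces a proportionally-large one in one of the pieces. So it suffices to handle the case where all columns share a common value set $\{v_1, \dots, v_k\}$; then a single interpolating polynomial $p$ with $p(v_1) = p(v_2)$ and $p$ injective on $\{p(v_2), v_3, \dots, v_k\}$ makes $p(M)$ a $(k-1)$-listable matrix of rank $\le \sum_{c=0}^{k-1} d^c = \poly(d)$, and a $1$-listable submatrix of $p(M)$ pulls back---after splitting on the at-most-two preimages of the merged value, which costs a further factor of $2$---to a $1$-listable submatrix of $M$ of fractional size $2^{-p_{k-1}(\log(\poly(d)))} / 2 = 2^{-\poly(\log d)}$. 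Reducing the general case to the common-value-set case is the step I'd be most careful about: one cannot assume columns share a value set for free, so I would instead argue that the number of \emph{distinct} value sets appearing is at most $\binom{N}{k}$ where $N$ is the total number of distinct entries in $M$---but $N$ can be huge. Hence the genuinely correct move must avoid this; I anticipate the paper instead merges, in \emph{every} column simultaneously, the two values that are (say) the two largest, via the polynomial $p$ with a double root at a generic point, after first conjugating by a column-wise affine map chosen so that in each column the top two values land at a fixed pair---and tracking that this conjugation multiplies rank by at most a constant and the entrywise polynomial by \cref{lemma:poly-rank} multiplies it by $\poly$. Composing the reduction $k \to k-1$ a total of $k-2$ times yields $p_k$ as an iterated $\poly$, hence still polynomial in $\log d$, completing the induction.
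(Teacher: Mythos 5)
Your core strategy — normalize each column by an affine map so that two designated values land at a fixed pair, then apply a single global polynomial to merge them and recurse via \cref{lemma:poly-rank} — is exactly the paper's. Your diagnosis that a global polynomial cannot be tailored to each column's distinct value set, and your fix (column-wise affine conjugation first, which is a rank-$1$ perturbation plus a diagonal scaling and so costs only $+1$ in rank), are both on target.

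But the last step of your inductive argument has a genuine error: you claim that a $1$-listable submatrix of $p(M)|_{S\times T}$ pulls back to a $1$-listable submatrix of $M$ ``after splitting on the at-most-two preimages of the merged value, which costs a further factor of $2$.'' It does not. What you actually obtain is a $2$-listable submatrix $M|_{S\times T}$: in each column $j$ the entries of $M|_{S\times T}$ are roots of $p(z)=c_j$ (so at most two values appear), but \emph{different rows may take different roots}, and no factor-of-$2$ trick extracts a constant-column submatrix from it. Concretely, if $M$ is already a Boolean matrix of rank $d$ and $p(z)=z(z-1)$, then $p(M)\equiv 0$ is $1$-listable in its entirety, yet $M$ itself may have its largest monochromatic rectangle of density as small as $2^{-\polylog(d)}$. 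The correct move, as in the paper, is to note that $M|_{S\times T}$ is $2$-listable of rank at most $d$ and invoke the $k=2$ base case (\cref{conj:original-conj}) once more, costing an additional factor of $2^{-p_2(\log d)}$ rather than $1/2$; this is what forces the recursion on the polynomials $p_k$. A minor further omission: you must first dispose of columns that are already constant (they have no ``top two'' values to normalize), which the paper does by arguing that either half the columns are constant and you are done, or discarding them halves $m$.
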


\begin{proof}
To begin, we argue that it suffices to prove the theorem only for matrices $M$ which (1) have no $1$-listable (i.e., constant) columns and (2) contain a 0 and 1 in every column. Firstly, we reduce to the case where (1) holds: If at least half of $M$'s columns are $1$-listable, then we immediately have a $1$-listable submatrix of $M$ containing all the rows and at least half the columns. Otherwise, we may throw out all the $1$-listable columns, thereby reducing the total number of columns by at most half without increasing the rank. Next, we reduce to the case where (2) holds as well. Since (1) holds, we can let $\{a_j,b_j\}_{j \in [m]}$ with $a_j \ne b_j$ be such that $j$-th column of $M$ contains $a_j$ and $b_j$. Let $A \in \BR^{n\times m}$ be the rank-1 matrix with column $j$ being the constant vector $(a_j,\ldots,a_j)$. Let $D \in \BR^{m \times m}$ be the diagonal matrix with $(j,j)$-th entry being $1/(b_j - a_j)$. Now let $N := (M - A) D$. Then $\rank(N) \leq \rank(M-A) \leq d+1$ by subadditivity of rank, and moreover every column of $N$ contains a $0$ and a $1$. And proving the theorem for $N$ immediately implies the theorem for $M$, since if $S \subseteq [n], T \subseteq [m]$ are such that $N|_{S \times T}$ is a $1$-listable submatrix of $N$, then $(N+AD)|_{S \times T}$ is $1$-listable and hence so is $M|_{S \times T}$.

Now, we will prove the theorem by induction on $k$. The $k=2$ case is implied by the log-rank conjecture (in the form of \cref{conj:original-conj}), since if every column of $M$ is $2$-listable and contains a 0 and 1, $M$ is precisely a Boolean matrix.

Let $p_2$ be the polynomial given by the log-rank conjecture. For general $k$, assume the theorem holds for $k-1$, and let $p_k(x)$ be a polynomial satisfying $p_k(x) \geq p_{k-1}(\log(4^x+2^x)) + p_2(x)$ for sufficiently large $x$ (e.g., $p_k(x) = p_{k-1}(x^2+1) + p_2(x)$). For an arbitrary $k$-listable, rank-$d$ matrix $M$ with a 0 and a 1 in every column, let $\tilde{M}$ be the matrix with $\tilde{M}_{ij} := M_{ij}(M_{ij}-1)$. $\tilde{M}$ has rank at most $d^2+d$ by \cref{lemma:poly-rank}. Also $\tilde{M}$ is $(k-1)$-listable (since $0$'s and $1$'s in $M$ become $0$'s in $\tilde{M}$). So by induction $\tilde M$ has a submatrix $\tilde{M}|_{S \times T}$ which is $1$-listable, and $|S||T| \geq mn \cdot 2^{p_{k-1}(\log (d^2+d))}$. Now $M|_{S \times T}$ is $2$-listable, since each column of $\tilde M|_{S \times T}$ is some constant value $c$, and any value in the corresponding column of $M|_{S \times T}$ must be a root of $z(z-1)=c$. Moreover, $\rank(M|_{S \times T}) \leq \rank(M) = d$, since the rank of a submatrix never exceeds the original matrix's rank. We thus conclude, now using the base case $k=2$, that there exist $S' \subseteq S$ and $T' \subseteq T$ with $M|_{S' \times T'}$ being $1$-listable and $|S'||T'| \geq |S||T| \cdot 2^{- p_2(\log d)}$. Combining the above we have $|S'||T'| \geq mn \cdot 2^{- p_2(\log d)}\cdot 2^{- p_{k-1}(\log (d^2+d))} \geq mn \cdot 2^{-p_k(\log d)}$ by assumption on $p_k$.
\end{proof}

\section{An elementary lower bound on complete bipartite subgraph density}\label{sec:probabilistic-lower-bound}

In this section, we use the probabilistic method to prove \cref{thm:probabilistic-lower-bound}. We rely on the following standard fact:

\begin{proposition}\label{prop:decrease-dim}
	In $\BR^d$, let $f$ be a $j$-flat and $h$ a hyperplane. Suppose that $h$ intersects, but is not contained in, $f$. Then $f \cap h$ is a $(j-1)$-flat.
\end{proposition}

That is, the operation of ``nontrivial intersection with a hyperplane'' reduces the dimension of a flat by one.

\begin{proof}[Proof of \cref{thm:probabilistic-lower-bound}]
	Consider the following randomized process for choosing an affine subspace $S$: Select $H_1,\ldots,H_d$ uniformly and independently from $\mathcal{H}$, and output $S := H_1 \cap \cdots \cap H_d$.
	
	Let $G$ denote the event that ``at least $\frac{\epsilon^d}2$-fraction of the points in $\CP$ lie on $S$''. We claim that $\Pr[G] \geq \frac{\epsilon^d}2$. Indeed, define the random variable $X$ as the fraction of the points in $\CP$ lying on $S$. We can write $X=\Pr_p[p\text{ lies on }S]$ where $p \sim \CP$ is uniformly random. For each $i \in [d]$, we have $\Pr_{p,H_i}[p\text{ incident to }H_i] = \epsilon$, and thus by independence, \[ \E_{H_1,\ldots,H_d}[X] = \Pr_{p,H_1,\ldots,H_d}[p\text{ incident to }S] = \Pr_{p,H_1,\ldots,H_d}\left[\bigwedge_{i=1}^d p\text{ incident to }H_i\right] = \epsilon^d. \] Conditioning, we have
	\begin{align*}
	    \epsilon^d &= \E_{H_1,\ldots,H_d}\left[X \mid X \geq \frac{\epsilon^d}2\right]\Pr_{H_1,\ldots,H_d}\left[G\right]+\E_{H_1,\ldots,H_d}\left[X \mid X < \frac{\epsilon^d}2\right]\Pr_{H_1,\ldots,H_d}\left[X < \frac{\epsilon^d}2\right] \\
	    &\leq \Pr_{H_1,\ldots,H_d}\left[G\right]+\frac{\epsilon^d}2,
	\end{align*}
	yielding the desired conclusion.
	
	For $j \in [d]$, let $B_j$ denote the event ``at most $\frac{\epsilon^d}{3d}$-fraction of the hyperplanes in $\CH$ don't contain $H_1 \cap \cdots \cap H_{j-1}$ \emph{and} $H_j$ doesn't contain $H_1 \cap \cdots \cap H_{j-1}$''. (In the case $j=1$, we define the empty intersection as all of $\BR^d$, so that $B_1$ never occurs.) For each $j \in [d]$, since $H_j$ is independent of $H_1,\ldots,H_{j-1}$, we have $\Pr[B_j] \leq \frac{\epsilon^d}{3d}$.
	 
	Hence, by the union bound, the probability of the event ``$G$ doesn't occur or $B_j$ occurs for some $j$'' is at most $1-\frac{\epsilon^d}2 + \frac{\epsilon^d}3 = 1 - \frac{\epsilon^d}6 < 1$. So by the probabilistic method, there exists a list of hyperplanes $h_1,\ldots, h_d$ such that $G$ occurs and none of the events $B_j$ occur. $G$ implies that $S = h_1 \cap \cdots \cap h_d$ contains at least $\frac{\epsilon^d}2$-fraction of the points of $\CP$. Moreover, it cannot be the case that for all $j$, $h_j$ doesn't contain $h_1 \cap \cdots \cap h_{j-1}$, since then \cref{prop:decrease-dim} implies that $S$ is either a point or empty, so $G$ cannot occur by assumption. Hence for some $j$, $\Pr_{h \samples \CH}[h \text{ contains } h_1 \cap \cdots \cap h_{j-1}] \geq \frac{\epsilon^d}{3d}$, and since $S$ is contained in $h_1 \cap \cdots \cap h_{j-1}$, we can conclude that at least $\frac{\epsilon^d}{3d}$-fraction of the hyperplanes in $\CH$ contain $S$, as desired.
\end{proof}

\section{Explicit upper bound construction with exponentially small complete bipartite subgraphs but sub-constant incidence density}\label{sec:lattice-upper-bound}

In this section, we prove \cref{thm:lattice-upper-bound} using a lattice-based explicit upper bound construction, which modifies an upper bound construction of Apfelbaum and Sharir~\cite[Theorem 1.3]{AS07} (itself based on ideas from Elekes and T\'oth~\cite{ET05}).\footnote{For ease of notation, we use $\{-1,1\}^d$ for our lattice; Apfelbaum and Sharir \cite{AS07} used $[k]$ for an arbitrary parameter $k \in \BN$.} Specifically, we will construct configurations with $n = \Theta(2^d \sqrt d)$ points, $m = \Theta(2^d)$ hyperplanes, $\I(\CP,\CH) \geq \Omega(2^{2d})$ incidences, and a complete bipartite subgraph upper bound $\rs(\CP,\CH) \leq O(2^d)$.

To prove \cref{thm:lattice-upper-bound}, we assume for simplicity that $d-1$ is a perfect square. Now consider the set of points \[ \CP := \left\{(x_1,\ldots,x_d): x_1,\ldots,x_{d-1} \in \{-1,1\}, x_d \in \left\{-2\sqrt{d-1}, \ldots, 2\sqrt{d-1} \right\} \right\} \] and the set of hyperplanes \[ \CH := \left\{\left\{x \in \BR^d : \sum_i a_i x_i = 0 \right\} : a_1,\ldots,a_{d-1} \in \{0,1\}, a_d = -1 \right\}. \] By construction, $n = |\CP| = 2^{d-1} \cdot \left(4 \sqrt {d-1} + 1\right) = \Theta(2^d \sqrt d)$ and $m = |\CH| = 2^{d-1} = \Theta(2^d)$. Also define the ``universe'' of points \[ \CU := \left\{(x_1,\ldots,x_d): x_1,\ldots,x_{d-1} \in \{-1,1\}, x_d \in \left\{-(d-1),\ldots,d-1\right\} \right\}. \] $\CU$ contains $\CP$ and has size $|\CU| = \Theta(d2^d)$.

Apfelbaum and Sharir~\cite[pp. 16-17]{AS07} proved the following three claims:

\begin{proposition}\label{claim:incidence-bound}
    $\I(\CU,\CH)=2^{2d-2}$.
\end{proposition}

\begin{proposition}\label{claim:point-bound}
    In $\BR^d$, let $f$ be a $j$-flat. Then at most $2^j$ points in $\CU$ lie on $f$.
\end{proposition}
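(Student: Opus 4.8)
The plan is to reduce the bound to the classical fact that a $k$-dimensional affine subspace of $\BR^{N}$ contains at most $2^{k}$ points of the cube $\{-1,1\}^{N}$, applied to the projection of $f$ onto its first $d-1$ coordinates.

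First I would record the classical fact as a lemma, proved by induction on $k$ using \cref{prop:decrease-dim}: a $0$-flat is a point and contains at most $1=2^{0}$ cube points; and for $k\ge 1$, if a $k$-flat $g$ contains two cube points $p\ne q$, pick a coordinate $i$ with $p_{i}=1$ and $q_{i}=-1$, note $g$ meets but is contained in neither of the hyperplanes $\{x_{i}=1\}$ and $\{x_{i}=-1\}$, so \cref{prop:decrease-dim} makes $g\cap\{x_{i}=1\}$ and $g\cap\{x_{i}=-1\}$ into $(k-1)$-flats, and every cube point of $g$ lies in one of them, giving $2\cdot 2^{k-1}=2^{k}$ by induction (and the case of at most one cube point on $g$ is trivial).

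To apply this, let $\pi\colon\BR^{d}\to\BR^{d-1}$ be projection onto the first $d-1$ coordinates. The key observation is that $\pi$ is injective on $f$ as soon as $f$ contains no line parallel to the $x_{d}$-axis; I would note this holds for every $j$-flat relevant to $\rs(\CP,\CH)$, namely one contained in a hyperplane of $\CH$, since each such hyperplane $\{\sum_{i}a_{i}x_{i}=0\}$ has $a_{d}=-1\ne 0$ and hence expresses $x_{d}$ as a linear function of $x_{1},\dots,x_{d-1}$. Granting injectivity of $\pi|_{f}$: the linear part of $f$ meets $\ker\pi$ trivially, so $\pi(f)$ is a $j$-flat, $\pi$ carries $\CU\cap f$ injectively into $\pi(f)\cap\{-1,1\}^{d-1}$, and the lemma gives $|\CU\cap f|=|\pi(\CU\cap f)|\le |\pi(f)\cap\{-1,1\}^{d-1}|\le 2^{j}$.

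The step needing care is exactly this injectivity, i.e.\ ruling out $j$-flats parallel to the $x_{d}$-axis: such a flat can meet $\CU$ in up to $2d-1$ points lying on one vertical line, so the bound $2^{j}$ is correct only after such flats are excluded, which is harmless here. If one prefers a self-contained induction on $j$ mirroring the lemma, the same case analysis works — split $f$ by $\{x_{i}=\pm1\}$ whenever $\CU\cap f$ has two points differing in a coordinate $i\le d-1$ — and the only stuck case is again when all points of $\CU\cap f$ agree on their first $d-1$ coordinates, i.e.\ lie on a single line in the $x_{d}$-direction.
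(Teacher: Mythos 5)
Your proof is essentially the paper's: both reduce through the coordinate isomorphism between $h$ and $\BR^{d-1}$ (you use the projection $\pi$ restricted to $h$; the paper uses the inverse section $\phi_h : (x_1,\ldots,x_{d-1}) \mapsto (x_1,\ldots,x_{d-1},\sum_i a_i x_i)$) to the lemma that a $j$-flat in $\BR^{d-1}$ meets $\{-1,1\}^{d-1}$ in at most $2^j$ points, which is the paper's \cref{prop:lattice-int}, proved by the same coordinate-slicing induction via \cref{prop:decrease-dim}. Your observation that the proposition as literally stated fails for flats parallel to the $x_d$-axis and implicitly requires $f$ to be contained in some $h \in \CH$ is correct and worth flagging; the paper's proof silently invokes this hypothesis in its opening sentence (``Consider any hyperplane $h \in \CH$ containing $f$''), and the assumption is harmless in the only place the claim is used, namely bounding $\rs(\CU,\CH)$.
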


\begin{proposition}\label{claim:hyper-bound}
    In $\BR^d$, let $f$ be a $j$-flat that is contained in some hyperplane $h \in \CH$. Then at most $2^{d-j-1}$ hyperplanes in $\CH$ contain $f$.
\end{proposition}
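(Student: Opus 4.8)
The plan is to exploit the explicit $\{0,1\}$-parametrization of $\CH$. Write each hyperplane in $\CH$ as $h_a := \{x \in \BR^d : \langle (a,-1), x\rangle = 0\}$ for $a \in \{0,1\}^{d-1}$; since a hyperplane has a unique normal direction and all of these normals have last coordinate $-1$, the map $a \mapsto h_a$ is injective, so $\CH$ is in bijection with $\{0,1\}^{d-1}$ and I only need to bound the number of admissible $a$'s. The first observation is that, because $h_a$ is a \emph{linear} hyperplane, $f \subseteq h_a$ holds if and only if $(a,-1)$ is orthogonal to $W := \operatorname{span}(f)$, the smallest linear subspace containing $f$. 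Since $f$ is a $j$-flat, $\dim W \in \{j,j+1\}$, so $\dim W \ge j$; and since $f$ lies in some $h \in \CH$ we have $W \subseteq h$, hence $\dim W \le d-1$. Thus the set of hyperplanes of $\CH$ containing $f$ is in bijection with
\[ N := \bigl\{\, (a,-1) : a \in \{0,1\}^{d-1},\ (a,-1) \in W^{\perp} \,\bigr\}, \]
and it remains to show $|N| \le 2^{d-j-1}$.

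Next I would pin $N$ inside a low-dimensional affine flat. By hypothesis there is at least one $a_0$ with $(a_0,-1) \in W^{\perp}$; as this vector has last coordinate $-1 \neq 0$, the functional $x \mapsto x_d$ does not vanish identically on $W^{\perp}$, so $L := W^{\perp} \cap \{x_d = -1\}$ is a nonempty affine subspace of dimension $\dim W^{\perp} - 1 = (d - \dim W) - 1 \le d - j - 1$, and clearly $N \subseteq L$. Finally, the coordinate projection $\pi : \BR^d \to \BR^{d-1}$ forgetting the last coordinate restricts to an affine isomorphism on $\{x_d = -1\}$, hence is injective on $L$; it maps $N$ into $\{0,1\}^{d-1}$ and maps $L$ onto an affine subspace $\pi(L) \subseteq \BR^{d-1}$ with $\dim \pi(L) = \dim L \le d-j-1$. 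So it suffices to invoke the standard fact that a $t$-dimensional affine subspace of $\BR^n$ contains at most $2^t$ points of $\{0,1\}^n$ — proved by choosing $t$ ``pivot'' coordinates on which the subspace's linear part projects bijectively, so that its Boolean points inject into $\{0,1\}^t$. Applying this with $t = \dim\pi(L)$ yields $|N| = |\pi(N)| \le 2^{d-j-1}$, as desired.

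I do not expect a serious obstacle here; the argument is a clean counting argument dual in spirit to \cref{claim:point-bound}. The one thing to get right is the dimension bookkeeping: the ``$-1$'' in the target exponent comes from the rigid constraint that every normal vector has last coordinate exactly $-1$ (which drops the dimension of the ambient set of normals by one), \emph{combined} with the inequality $\dim \operatorname{span}(f) \ge j$, which is an equality precisely when $f$ passes through the origin. Since $\dim\operatorname{span}(f)$ may be $j+1$, the bound is usually far from tight, but it is never violated; all the other ingredients (injectivity of $a \mapsto h_a$, the orthogonality characterization of containment, injectivity of $\pi$ on $\{x_d=-1\}$, and the cube-slice bound) are routine.
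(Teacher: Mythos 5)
Your proof is correct and takes essentially the same approach as the paper's: you characterize the admissible normal vectors as Boolean points of an affine flat cut out by intersecting an orthocomplement attached to $f$ with the hyperplane $\{x_d=-1\}$, and then invoke the cube-slice bound. The only cosmetic differences are that the paper works with $K=\ker(M^{\top})$ (the orthocomplement of the \emph{direction space} of $f$, of dimension exactly $d-j$) rather than your $W^{\perp}=\operatorname{span}(f)^{\perp}\subseteq K$, and that your projection to $\BR^{d-1}$ cleanly places the candidate normals inside $\{0,1\}^{d-1}$, whereas the paper applies \cref{prop:lattice-int} directly via the claim $a\in K\cap h'\cap\{-1,1\}^d$ (a harmless slip --- the normals actually lie in $\{0,1\}^{d-1}\times\{-1\}$, but the cube-slice bound adapts verbatim to any axis-aligned product of two-element sets).
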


We include proofs of all three claims in \cref{app:AS07-lower-bound} for completeness. The latter two claims together imply that $\rs(\CU,\CH) = O(2^d)$. Finally, we prove:

\begin{proposition}\label{claim:dense-subgraph}
$\I(\CP,\CH) \geq \Omega(2^{2d})$.
\end{proposition}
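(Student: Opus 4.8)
The plan is to derive \cref{claim:dense-subgraph} from the exact count $\I(\CU,\CH) = 2^{2d-2}$ of \cref{claim:incidence-bound} by showing that a constant fraction of those incidences already involve points of $\CP$. Recall that each hyperplane $h \in \CH$ is given by a coefficient vector $(a_1,\dots,a_{d-1},-1)$ with each $a_i \in \{0,1\}$, and that a point $(x_1,\dots,x_{d-1},x_d)$ with $x_1,\dots,x_{d-1} \in \{-1,1\}$ lies on $h$ exactly when $x_d = \sum_{i=1}^{d-1} a_i x_i$. Thus, for each of the $2^{d-1}$ hyperplanes and each of the $2^{d-1}$ sign patterns $(x_1,\dots,x_{d-1}) \in \{-1,1\}^{d-1}$, there is a unique completion $x_d$ producing an incidence, and the resulting point lies in $\CP$ (rather than merely in $\CU$) precisely when $|x_d| \le 2\sqrt{d-1}$.

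First I would fix a hyperplane $h$ with coefficient vector $a$, set $k := |\{i : a_i = 1\}| \le d-1$, and consider a uniformly random sign pattern $x \in \{-1,1\}^{d-1}$. Then $S := \sum_{i=1}^{d-1} a_i x_i = \sum_{i : a_i = 1} x_i$ is a sum of $k$ independent $\pm 1$ random variables, so $\E[S^2] = k \le d-1$. Applying Markov's inequality to $S^2$ gives $\Pr[|S| > 2\sqrt{d-1}] = \Pr[S^2 > 4(d-1)] \le \tfrac{d-1}{4(d-1)} = \tfrac14$, so for at least $\tfrac34 \cdot 2^{d-1}$ of the sign patterns the incident point satisfies $|x_d| \le 2\sqrt{d-1}$ and hence lies in $\CP$. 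Summing over all $2^{d-1}$ hyperplanes then yields $\I(\CP,\CH) \ge \tfrac34 \cdot 2^{d-1} \cdot 2^{d-1} = \tfrac34 \cdot 2^{2d-2} = \Omega(2^{2d})$, as claimed.

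I do not expect a genuine obstacle here: the heart of the argument is a one-line second-moment (Markov-on-$S^2$) estimate. The only points to be careful about are that $2\sqrt{d-1}$ is an integer --- so that the $x_d$-range defining $\CP$ is exactly $\{-2\sqrt{d-1},\dots,2\sqrt{d-1}\}$ --- which holds under the running assumption that $d-1$ is a perfect square, and that the tail bound is uniform over all hyperplanes, which it is since $k \le d-1$ independently of $a$. (One could alternatively sharpen the constant $\tfrac34$ to $1-o(1)$ via a Chernoff bound, but this is unnecessary for an $\Omega(2^{2d})$ conclusion.)
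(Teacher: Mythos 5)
Your proof is correct and follows the same underlying strategy as the paper's: combine the exact count $\I(\CU,\CH) = 2^{2d-2}$ from \cref{claim:incidence-bound} with a concentration estimate showing that a constant fraction of these incidences already involve points of $\CP$. The only substantive difference is the tail bound. The paper samples $x_1,\dots,x_{d-1}$ and $a_1,\dots,a_{d-1}$ jointly at random and invokes Chernoff--Hoeffding on the sum $\sum a_i x_i$, obtaining a success probability of $1 - 2/e^2$. You instead fix the hyperplane (i.e., fix $a$), observe that $S = \sum_{i:a_i=1} x_i$ is a sum of $k \le d-1$ independent Rademacher variables with $\E[S^2] = k$, and apply Markov to $S^2$ (equivalently, Chebyshev) to get a failure probability of at most $1/4$ uniformly over all hyperplanes. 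Your version is marginally more elementary --- it avoids any exponential tail bound and also dispenses with the paper's intermediate step of working out the three-valued distribution of $a_i x_i$ --- and it gives a slightly sharper constant ($3/4$ versus $1 - 2/e^2 \approx 0.73$). Both are entirely adequate for the $\Omega(2^{2d})$ conclusion, so this is a cosmetic but legitimate simplification rather than a different route.
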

\begin{proof}
We proceed probabilistically, showing that ``many'' settings of the variables $x_1,\ldots,x_{d-1}$ and $a_1,\ldots,a_{d-1}$ result in a value for $x_d = \sum_{i=1}^{d-1} a_i x_i$ that lies within the interval $[-2\sqrt{d-1},2\sqrt{d-1}]$. Consider the following experiment: Choose $x_1,\ldots,x_{d-1}$ uniformly and independently from $\{-1,1\}$, $a_1,\ldots,a_{d-1}$ uniformly and independently from $\{0,1\}$, and output \texttt{Succeed} if the sum $\sum_{i=1}^{d-1} a_i x_i$ lies in the aforementioned interval.

Each $a_i x_i$ is independently $1$ w.p. $\frac14$, $-1$ w.p. $\frac14$, and $0$ w.p. $\frac12$. Then the Chernoff-Hoeffding bound gives \[ \Pr\left[\sum_{i=1}^{d-1} a_i x_i \not\in \left[-2\sqrt {d-1}, 2\sqrt {d-1}\right] \right] \leq 2 \exp\left(-\frac{2(2\sqrt{d-1})^2}{(d-1)\cdot 2^2} \right) = \frac{2}{e^2}. \]

Thus, the experiment outputs \texttt{Succeed} with probability at least $1-\frac{2}{e^2}$. Hence using \cref{claim:incidence-bound}, $\I(\CP,\CH) \geq (1-\frac{2}{e^2}) \I(\CU,\CH) \geq \Omega(2^{2d})$.
\end{proof}

Given \cref{claim:incidence-bound,claim:point-bound,claim:hyper-bound,claim:dense-subgraph}, \cref{thm:lattice-upper-bound} follows:

\begin{proof}[Proof of \cref{thm:lattice-upper-bound}]
    Assume for simplicity that $d-1$ is a perfect square. By \cref{claim:dense-subgraph}, $\I(\CP,\CH) \geq \Omega(2^{2d})$. By \cref{claim:point-bound,claim:hyper-bound}, $\rs(\CU,\CH) \leq 2^{d-1}$, and by definition we have $\rs(\CP,\CH) \leq \rs(\CU,\CH)$. Since $n = \Theta(2^d \sqrt d)$ and $m = \Theta(2^d)$, we have $\I(\CP,\CH) \geq \Omega(nm/\sqrt d)$ and $\rs(\CP,\CH) \leq O(mn \cdot 2^{-d}/\sqrt d)$, as desired.
\end{proof}

\section{Discussion: Upper bounds from cross-intersecting families}\label{sec:set-system-upper-bound}

In this section, we report on constructions~\cite{Lov16,Lov21,Pal21,FW21} which show that $O(\sqrt d)$ is the best possible exponent we could hope for in \cref{conj:general-conj}, i.e., we exhibit explicit configurations with $\rs(\CP,\CH)/mn \leq 2^{-\Omega(\sqrt d)}$ and $\I(\CP,\CH)/mn \geq \Omega(1)$ (see \cref{thm:lattice-upper-bound-from-sets-palv} below). These constructions all arise from \emph{products of Boolean matrices}, and there are a number of natural questions in this area which we pose.

A length-$d$ Boolean vector can be viewed as the indicator of a subset of $[d]$, and using the language of set systems will provide another helpful perspective on the log-rank conjecture. (Using this perspective to construct counterexamples was suggested by~\cite{Lov21} and~\cite{FW21}.)

Let $\CA,\CB \subseteq 2^{[d]}$ be two set systems on $[d]$. Following are two notions which describe patterns among the intersection sizes $|A \cap B|$ for $A \in \CA, B \in \CB$. For $\epsilon \in [0,1]$, we say that $\CA,\CB$ are \emph{$\epsilon$-almost cross-disjoint} if $\Pr_{A \sim \CA, B \sim \CB} [A \cap B \neq \emptyset] \leq \epsilon$, and \emph{exactly cross-disjoint} in the special case $\epsilon = 0$. Following~\cite{KS05}, for $L \subseteq \{0,\ldots,d\}$, we say that $\CA,\CB$ are \emph{$L$-cross-intersecting} if for every $A \in \CA$ and $B \in \CB$, $|A \cap B| \in L$.

These notions have linear-algebraic interpretations. For $\CA,\CB \subseteq 2^{[d]}$, we can define the matrix $\Mat(\CA,\CB) \in \{0,\ldots,d\}^{\CA \times \CB}$ whose $(A,B)$-th entry is $|A \cap B|$. $\CA,\CB$ are $\epsilon$-almost cross-disjoint iff all but $\epsilon$-fraction of $\Mat(\CA,\CB)$'s entries are zeros. If $\CA,\CB$ are $L$-cross-intersecting, then $\Mat(\CA,\CB)$ is $|L|$-ary.

We have two conjectures about pairs of set systems.

\subsection{Conjecture on almost cross-disjoint set systems}

Our first conjecture would be implied by \cref{conj:general-conj}:\footnote{This implication follows from viewing each $A \subseteq \CA$ as a point in $\{0,1\}^d$ and each $B \subseteq \CB$ as a hyperplane with normal vector in $\{0,1\}^d$ and offset 0.}

\begin{conjecture}\label{conj:general-conj-for-set-systems}
The following is true for every fixed $\epsilon > 0$. Let $\CA,\CB \subseteq 2^{[d]}$ be $\epsilon$-almost cross-disjoint. Then there exist $\CR \subseteq \CA,\CS \subseteq \CB$ such that $\CR$ and $\CS$ are exactly cross-disjoint, and $|\CR||\CS| \geq |\CA| |\CB| \cdot 2^{-O(\sqrt d)}$. Equivalently, $\Mat(\CA,\CB)$ contains a $0$-monochromatic rectangle of density at least $2^{-O(\sqrt d)}$.
\end{conjecture}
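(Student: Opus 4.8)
The plan is to work with the combinatorial picture behind the statement. A pair $\CR \subseteq \CA$, $\CS \subseteq \CB$ is exactly cross-disjoint precisely when $\bigcup_{A \in \CR} A$ is disjoint from $\bigcup_{B \in \CS} B$; equivalently, there is a set $T \subseteq [d]$ with every $A \in \CR$ satisfying $A \subseteq T$ and every $B \in \CS$ satisfying $B \subseteq [d] \setminus T$. So it suffices to produce a single $T \subseteq [d]$ with $|\{A \in \CA : A \subseteq T\}| \cdot |\{B \in \CB : B \subseteq [d]\setminus T\}| \geq |\CA|\,|\CB|\cdot 2^{-O(\sqrt d)}$. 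The first and easiest case is when every set in $\CA \cup \CB$ has size at most $s := O(\sqrt d)$: choosing $T$ by including each coordinate independently with probability $1/2$,
\[ \E_T\!\left[\,|\{A \in \CA: A \subseteq T\}|\cdot|\{B \in \CB: B \subseteq [d]\setminus T\}|\,\right] \;=\; \sum_{\substack{A \in \CA,\, B \in \CB\\ A \cap B = \emptyset}} 2^{-|A|-|B|} \;\geq\; (1-\epsilon)\,|\CA|\,|\CB|\cdot 2^{-2s}, \]
since at least a $(1-\epsilon)$-fraction of pairs are disjoint; the probabilistic method then finishes this case. Second, if $\min(|\CA|,|\CB|) \leq 2^{O(\sqrt d)}$ --- say $|\CA|$ is the smaller --- then averaging over $A_0 \sim \CA$ shows some $A_0$ has $|\{B \in \CB : B \cap A_0 = \emptyset\}| \geq (1-\epsilon)|\CB|$, and taking $\CR = \{A_0\}$, $\CS = \{B \in \CB : B \cap A_0 = \emptyset\}$ already gives $|\CR||\CS| \geq (1-\epsilon)|\CB| \geq |\CA||\CB|\cdot 2^{-O(\sqrt d)}$.

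To reduce the general case toward these two, I would bucket $\CA$ and $\CB$ by set size into $O(\log d)$ classes each. A counting argument --- the bucket-pairs whose non-disjoint density exceeds $2\epsilon$ carry less than half the total mass $|\CA||\CB|$, since all non-disjoint pairs together number at most $\epsilon|\CA||\CB|$ --- isolates one bucket-pair $(\CA',\CB')$ with $|\CA'||\CB'| \geq |\CA||\CB| / (2(\log d + 1)^2)$, non-disjoint density at most $2\epsilon$, and all sets of a common size $\approx a$ in $\CA'$ and $\approx b$ in $\CB'$; here the $\polylog(d)$ loss is irrelevant against the target $2^{-O(\sqrt d)}$. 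Within this pair, near-cross-disjointness and averaging force $|\CA'| \leq O\!\left(\binom{d-b}{a}\right)$ and $|\CB'| \leq O\!\left(\binom{d-a}{b}\right)$ (for $\epsilon$ bounded suitably away from $1$; a further reduction would handle $\epsilon$ close to $1$). When $\min(a,b) \leq \sqrt d$ --- say $a \leq \sqrt d$ --- this bounds $|\CA'| \leq O(d^{a}) \leq 2^{O(\sqrt d \log d)}$, so the single-set argument applies (up to a $\log d$ factor in the exponent), and when $a+b = O(\sqrt d)$ the balanced random partition above applies directly.

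The remaining regime is the main obstacle: both $a$ and $b$ exceed $\sqrt d$, $a+b$ is superconstant times $\sqrt d$ (possibly $\Theta(d)$, e.g.\ $a \approx b \approx d/4$), and both $\CA'$ and $\CB'$ are exponentially large. A product distribution on $T$ with the optimal bias $q = a/(a+b)$ only guarantees a rectangle of relative size $2^{-(a+b)\,H(a/(a+b))}$ (with $H$ the binary entropy function), which can be $2^{-\Omega(d)}$, and the single-set bound is no better; yet the constructions of \cref{sec:set-system-upper-bound} show $2^{-\Omega(\sqrt d)}$ is genuinely unbeatable, so some finer argument must be available. The guiding intuition is that near-cross-disjoint families of medium-weight sets cannot be ``generic'' --- a random family of $d/4$-subsets is very far from cross-disjoint --- so such $\CA',\CB'$ must essentially concentrate on (approximately) disjoint coordinate blocks, which is exactly the structure a good $T$ should exploit. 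Making this rigorous seems to require a genuine stability/structure theorem for near-cross-intersecting set families (in the spirit of Bollob\'as's set-pair inequality and its robust versions), or else a data-dependent distribution over partitions that provably beats every product distribution in this range. A sensible intermediate target is the weaker bound $2^{-\tilde O(\sqrt d)}$; and since \cref{conj:general-conj} implies the present conjecture, progress there would also subsume it.
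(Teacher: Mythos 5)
The statement you set out to prove is, in the paper, a \emph{conjecture}: it appears in \cref{sec:set-system-upper-bound} as an open problem implied by \cref{conj:general-conj}, and the paper supplies no proof of it. The only result the paper proves in this direction is the converse, \cref{thm:lattice-upper-bound-from-sets-palv}, which shows the exponent $O(\sqrt d)$ in the statement is best possible. So there is no ``paper's own proof'' to compare against; your attempt must be judged on its own terms, and on those terms it is an incomplete argument whose incompleteness you yourself correctly flag.

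Your intermediate steps are largely sound: the translation of exactly-cross-disjoint rectangles into a single separating set $T$, the uniform random $T$ handling the low-uniformity case, the single-set averaging handling the small-family case, and the Markov-type argument isolating one dyadic bucket-pair carrying a $1/O(\log^2 d)$ fraction of the mass with non-disjoint density at most $2\epsilon$. Two caveats: the bound $|\CA'| \leq O\!\left(\binom{d-b}{a}\right)$ requires $\epsilon$ bounded away from $1$ as you note, and in the $a \leq \sqrt d$ branch the resulting bound $|\CA'| \leq d^{O(\sqrt d)} = 2^{O(\sqrt d \log d)}$ already overshoots the target exponent $O(\sqrt d)$ by a $\log d$ factor, so even that branch would need sharpening. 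But the real gap is exactly the regime you single out at the end: when both $a$ and $b$ are well above $\sqrt d$ (say $\Theta(d)$) and both $\CA'$, $\CB'$ are exponentially large, a product distribution on $T$ only achieves density about $2^{-(a+b)H(a/(a+b))}$, which can be $2^{-\Omega(d)}$, and no argument in the paper --- nor, to our knowledge, elsewhere --- closes it. The directions you sketch (a stability theorem for near-cross-disjoint medium-weight families, or a data-dependent choice of $T$) are the right sort of idea, but neither is currently available; this is precisely why the statement is a conjecture and not a theorem.
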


The following example based on the idea of \Palvolgyi~\cite{Pal21} shows the necessity of the exponent $O(\sqrt d)$ in  \cref{conj:general-conj-for-set-systems} (and by extension \cref{conj:general-conj-for-set-systems}).

\begin{theorem}\label{thm:lattice-upper-bound-from-sets-palv}
The following is true for every $\epsilon$ in a dense subset of $(0,1)$. There exists an infinite, increasing sequence of dimensions $d_1,d_2,\ldots$, and an infinite sequence of set systems $(\CA_i,\CB_i)$ on $[d_i]$, such that $(\CA_i,\CB_i)$ is $\delta_i$-almost cross-disjoint with $\delta_i \to \epsilon$ as $i \to \infty$, but $\Mat(\CA_i,\CB_i)$ contains no 0-monochromatic submatrices of density larger than $2^{-\Omega_\epsilon(\sqrt {d_i})}$.
\end{theorem}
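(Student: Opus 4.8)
The plan is to realize the configuration as a ``tensor power'' of a single one-block gadget, so that a $0$-monochromatic rectangle is forced to waste a constant factor in each of $\Theta(\sqrt d)$ independent coordinates. Fix positive integers $p,q$, and for $i = 1,2,\dots$ set $s_i := qi$, $t_i := pi$, and $d_i := s_i t_i = pq\, i^2$; partition $[d_i]$ into $t_i$ consecutive blocks $B_1,\dots,B_{t_i}$, each of size $s_i$. Let $\CA_i = \CB_i$ be the family of all \emph{transversals} of this partition (sets meeting every block in exactly one point), so $|\CA_i| = |\CB_i| = s_i^{t_i}$ and $\Mat(\CA_i,\CB_i)$ has $s_i^{2t_i}$ entries. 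Two transversals are disjoint iff they disagree in every block, and for a uniformly random pair the block choices are independent, so $\Pr_{A\sim\CA_i,\,B\sim\CB_i}[A\cap B\neq\emptyset] = 1-(1-1/s_i)^{t_i} =: \delta_i$. Since $t_i/s_i = p/q$ is fixed, $\delta_i \to 1-e^{-p/q} =: \epsilon$ as $i\to\infty$; and as $p/q$ ranges over the positive rationals, the numbers $1-e^{-p/q}$ form a dense subset of $(0,1)$. Thus, after discarding the finitely many degenerate small $i$ (those with $s_i = 1$), $(\CA_i,\CB_i)$ is $\delta_i$-almost cross-disjoint with $\delta_i \to \epsilon$ and $d_1 < d_2 < \cdots$, as required.

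It remains to bound the $0$-monochromatic submatrices of $\Mat(\CA_i,\CB_i)$. Let $\CR\subseteq\CA_i$ and $\CS\subseteq\CB_i$ be such that every $A\in\CR$ is disjoint from every $B\in\CS$. For each block $j$, let $R_j \subseteq B_j$ be the set of points of $B_j$ used by some member of $\CR$, and let $S_j\subseteq B_j$ be defined analogously from $\CS$. If some $x\in B_j$ lay in $R_j\cap S_j$, then choosing $A\in\CR$ and $B\in\CS$ that both select $x$ in block $j$ would give $x\in A\cap B$, a contradiction; hence $R_j\cap S_j = \emptyset$, so $|R_j|+|S_j|\le s_i$ and $|R_j|\,|S_j|\le s_i^2/4$. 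Since a transversal is determined by its sequence of choices, and every member of $\CR$ (resp.\ $\CS$) chooses within $R_j$ (resp.\ $S_j$) in block $j$, we have $|\CR|\le\prod_j|R_j|$ and $|\CS|\le\prod_j|S_j|$, hence $|\CR|\,|\CS|\le\prod_{j=1}^{t_i}|R_j|\,|S_j|\le(s_i^2/4)^{t_i}$. Dividing by the entry count $s_i^{2t_i}$, every $0$-monochromatic submatrix has density at most $4^{-t_i}$; and $t_i = \sqrt{p/q}\,\sqrt{d_i} = \Omega_\epsilon(\sqrt{d_i})$, so this density is $2^{-\Omega_\epsilon(\sqrt{d_i})}$, as claimed.

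The probabilistic computation of $\delta_i$ (block independence plus a standard limit) and the arithmetic relating $d_i$, $s_i$, $t_i$, and the entry count are routine. The one real design decision---which I would flag as the crux---is the calibration $t_i = \Theta(\sqrt{d_i})$: this is precisely the regime in which the one-block gadget, which permits only $s_i^2/4$ of its $s_i^2$ ordered pairs to lie inside a monochromatic rectangle, compounds over $t_i$ coordinates into a $2^{-\Theta(t_i)} = 2^{-\Theta(\sqrt d)}$ rectangle bound while the disjointness probability $(1-1/s_i)^{t_i}$ stays bounded away from $0$. It is worth recording that a $0$-monochromatic submatrix of $\Mat(\CA_i,\CB_i)$ is exactly a complete bipartite subgraph of the point-hyperplane incidence graph obtained by reading each $A\in\CA_i$ as a point in $\{0,1\}^{d_i}$ and each $B\in\CB_i$ as a hyperplane through the origin whose normal vector is the indicator of $B$; the incidence density of that configuration is $1-\delta_i = \Omega_\epsilon(1)$, so the theorem indeed shows that $O(\sqrt d)$ is the best exponent attainable in \cref{conj:general-conj}.
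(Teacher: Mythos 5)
Your proof is correct and takes essentially the same approach as the paper: your transversals of a partition into $t_i$ blocks of size $s_i$ are exactly the paper's $[a]\times[b]$ Boolean matrices with one $1$ per column (take $a=s_i$, $b=t_i$, and $\alpha = a/b = q/p$), and your per-block bound $|R_j|\,|S_j|\le s_i^2/4$ is the paper's passage to complementary $R^*,S^*$ and the bound $|\CR|\,|\CS|\le (a/2)^{2b}$ in thin disguise. One small point in your favor: you set $\delta_i=\Pr[A\cap B\neq\emptyset]=1-(1-1/s_i)^{t_i}$, which correctly matches the definition of $\delta_i$-almost cross-disjoint, whereas the paper's writeup takes $\delta_i=(a-1)^b/m$, the probability of \emph{disjointness} --- an apparent sign slip that does not affect the construction or the conclusion, since $\{1-e^{-1/\alpha}\}$ is just as dense in $(0,1)$ as $\{e^{-1/\alpha}\}$.
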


\begin{proof}
Consider any positive rational number $\alpha$; we will prove the theorem for $\epsilon := e^{-1/\alpha}$ (so the set of all $\epsilon$'s is dense in $(0,1)$).

Consider, in increasing order, all values $b \in \BN$ such that $a := \alpha b$ is also an integer (there are infinitely many such $b$'s by rationality). Let $d_i := ab$. We identify $[d_i]$ with $[a] \times [b]$ and subsets of $[d_i]$ with $a \times b$ Boolean matrices. Let $\CA_i = \CB_i \subset 2^{[d_i]}$ be the subset of matrices which have exactly one 1 in every column. If $n := |\CA_i|$ and $m := |\CB_i|$, then we have $n = m = a^b$.

Each $A \in \CA_i$ is disjoint from $(a-1)^b$ sets in $\CB_i$, so $(\CA_i,\CB_i)$ is $\delta_i$-almost cross-disjoint with $\delta_i = \frac{(a-1)^b}{m}$. 
Moreover, as $b$ approaches $\infty$, $\delta_i$ approaches $e^{-1/\alpha}$.

Now consider any 0-monochromatic rectangle $\CR \subset \CA_i, \CS \subset \CB_i$. Defining $R^* := \bigcup_{R \in \CR} R$ and $S^* := \bigcup_{S \in \CS} S$, we see that $R^*$ and $S^*$ must be disjoint. Hence we may assume without loss of generality that $\CR$ is the set of all matrices supported on $R^*$ and $\CS$ the set of all matrices supported on $S^*$, and that $R^*$ and $S^*$ are complementary. Defining $s_j := |\CR \cap ([a] \times \{j\})|$ (i.e., the size of $R^*$'s support in column $j$), we see that $|\CR| = s_1 \cdots s_b$ and $|\CS| = (a-s_1) \cdots (a-s_b)$. This product is maximized when each $s_j = \frac{a}2$; hence $|\CR| |\CS| \leq \left(\frac{a}2\right)^{2b}$. Thus, the complete bipartite subgraph density of $\Mat(\CA_i,\CB_i)$ is at most \[ \frac{|\CR||\CS|}{|\CA_i||\CB_i|} \leq \frac{\left(\frac{a}2\right)^{2b}}{a^{2b}} = 2^{-\Omega(b)} = 2^{-\Omega_\epsilon(\sqrt{d_i})}. \]
\end{proof}

An alternative proof was given by Lovett~\cite{Lov16,Lov21} and Fox and Wigderson~\cite{FW21}. This construction still takes $\CA=\CB$, but uses randomly sampled subsets of $[d]$ with some appropriate sparsity. It yields a similar $2^{-O(\sqrt d)}$ bound.

\subsection{Conjecture on cross-intersecting set systems}

Our second conjecture would be implied by \cref{conj:constant-sized-parallel-partitions} (and is thus equivalent to the log-rank conjecture):

\begin{conjecture}\label{conj:lrc-for-set-systems}
The following is true for every fixed $k > 0$. Let $\CA,\CB \subseteq 2^{[d]}$ be $L$-cross-intersecting, where $|L| = k$. Then there exist $\CR \subseteq \CA,\CS \subseteq \CB$, and $t \in L$, such that $\CR,\CS$ are $\{t\}$-cross-intersecting, and $|\CR||\CS| \geq |\CA||\CB|\cdot  2^{-{\polylog(d)}}$. Equivalently, $\Mat(\CA,\CB)$ contains a monochromatic rectangle of density at least $2^{-{\polylog(d)}}$.
\end{conjecture}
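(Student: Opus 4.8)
The plan is to establish the implication asserted just above the statement: \cref{conj:constant-sized-parallel-partitions} implies \cref{conj:lrc-for-set-systems}. The bridge is the matrix $\Mat(\CA,\CB) \in \{0,\ldots,d\}^{\CA \times \CB}$ together with the ``equivalently'' (matrix) form of \cref{conj:constant-sized-parallel-partitions}. First I would set $M := \Mat(\CA,\CB)$, whose $(A,B)$-entry is $|A \cap B|$. Identifying each subset of $[d]$ with its $\{0,1\}$-valued indicator vector $\mathbf{1}_A \in \BR^d$, we have $|A \cap B| = \langle \mathbf{1}_A, \mathbf{1}_B \rangle$, so $M = PQ$ where $P$ has rows $\mathbf{1}_A$ for $A \in \CA$ and $Q$ has columns $\mathbf{1}_B$ for $B \in \CB$; hence $\rank(M) \le d$. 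Since $\CA,\CB$ are $L$-cross-intersecting with $|L|=k$, every entry of $M$ lies in $L$, so $M$ is $k$-ary, and in particular $k$-listable.

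Next I would apply \cref{conj:constant-sized-parallel-partitions} (matrix form) to $M$: it yields a $1$-listable submatrix $M|_{\CR_0 \times \CS_0}$ with $|\CR_0||\CS_0| \ge |\CA||\CB|\cdot 2^{-\polylog(\rank M)} \ge |\CA||\CB|\cdot 2^{-\polylog(d)}$. Being $1$-listable, each column of $M|_{\CR_0 \times \CS_0}$ is constant, but these constants may differ between columns; crucially, each one lies in $L$, a set of size $k$. So by pigeonhole some value $t \in L$ is the constant on at least $|\CS_0|/k$ of the columns; let $\CS \subseteq \CS_0$ be that set of columns and $\CR := \CR_0$. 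Then $M|_{\CR \times \CS}$ is identically $t$, i.e.\ $|A \cap B| = t$ for all $A \in \CR$, $B \in \CS$, so $\CR,\CS$ are $\{t\}$-cross-intersecting, and $|\CR||\CS| \ge |\CA||\CB|\cdot 2^{-\polylog(d)}/k$, which is still $|\CA||\CB|\cdot 2^{-\polylog(d)}$ since $k$ is a fixed constant. This is exactly the conclusion of \cref{conj:lrc-for-set-systems}. (One may instead argue in configuration language, as in the footnote to \cref{conj:general-conj-for-set-systems}: take the points $\{\mathbf{1}_A : A \in \CA\}$ and, for each $B \in \CB$, the parallel hyperplanes $\langle x, \mathbf{1}_B\rangle = \ell$, one for each $\ell \in L$; this is a parallel $k$-partitioned configuration in $\BR^d$ with between $|\CB|$ and $k|\CB|$ hyperplanes, a complete bipartite subgraph is an affine subspace $S$, and the offsets of $S$ along the distinct normals $\mathbf{1}_B$ that contain it must again be pigeonholed down to one common value.)

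The steps are routine; the only point requiring care is that \cref{conj:constant-sized-parallel-partitions} promises only a $1$-listable (constant-columns) submatrix rather than an outright monochromatic one, so the bounded arity of $M$ has to be used a second time, by pigeonhole, to pin down a single shared value --- exactly the maneuver in part~(3) of \cref{prop:mat-con-corr}. Combined with \cref{thm:k-reduction} (by which \cref{conj:constant-sized-parallel-partitions} is equivalent to the log-rank conjecture), this shows that the log-rank conjecture implies \cref{conj:lrc-for-set-systems}; the converse direction --- which would require realizing an arbitrary low-rank Boolean matrix as $\Mat(\CA,\CB)$ over a ground set whose size is polynomial in the rank rather than in the matrix dimensions --- is the part I would expect to require more care.
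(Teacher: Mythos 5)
The statement you were asked to address is a conjecture and is open; the paper gives no proof of it. What the paper does assert---without spelling out the argument---is that \cref{conj:constant-sized-parallel-partitions} implies \cref{conj:lrc-for-set-systems}, and that implication is exactly what you prove. Your argument is correct and is the one the paper plainly intends: identifying subsets with indicator vectors shows $\Mat(\CA,\CB)$ has rank at most $d$, the $L$-cross-intersection hypothesis makes it $k$-ary (hence $k$-listable), the matrix form of \cref{conj:constant-sized-parallel-partitions} supplies a $1$-listable submatrix of density $2^{-\polylog(d)}$, and pigeonholing over the at most $k$ per-column constants extracts a genuinely monochromatic rectangle at the cost of a factor $k$, which is absorbed for fixed $k$. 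This is precisely the maneuver codified in \cref{prop:mat-con-corr}(3), so there is nothing to compare against beyond noting you filled in the step the paper left implicit.

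Your closing caveat is also well placed and worth emphasizing: the paper's parenthetical claim that \cref{conj:lrc-for-set-systems} is ``thus equivalent to the log-rank conjecture'' is not justified by the direction proved here. Establishing the converse would require encoding an arbitrary rank-$d$ Boolean matrix as $\Mat(\CA,\CB)$ over a ground set of size $\poly(d)$ (rather than size comparable to the matrix dimensions), and neither your argument nor anything in the paper accomplishes this; as written, the paper only shows that \cref{conj:lrc-for-set-systems} is \emph{implied by} the log-rank conjecture.
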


Lovett~\cite{Lov21} independently suggested studying the special case of \cref{conj:lrc-for-set-systems} where $k = 2$, which is perhaps the simplest combinatorial version of the log-rank conjecture. (He notes that in the subcase where $k=2$ \emph{and} $0 \in L$, \cref{conj:lrc-for-set-systems} is known to hold, since the ``log-\emph{nonnegative}-rank conjecture'' is known to hold (see, e.g., \cite[p. 57]{RY20}).) The subcase $L=\{k,k+1\}$ was described as an implication of the log-rank conjecture by Sgall~\cite{Sga99}.

$L$-cross-intersecting set systems have been studied in the extremal combinatorics literature (see e.g., \cite{FR87,Sga99,Sne03,KS05}). A typical goal in these works is to upper-bound the maximum size $|\CR||\CS|$ over all $L$-cross-intersecting set systems $(\CR,\CS)$ on $[d]$ under certain assumptions about $L$, such as being contained in a fixed number of residue classes in a fixed modulus~\cite{Sga99}. Interestingly, Frankl and R\"odl~\cite{FR87} showed that in the case $k=|L|=1$, we have $|\CR||\CS| \leq 2^d$ (see \cite[p. 556]{Sga99}). Thus, taking $\CA=\CB=2^{[d]}$, $\CA,\CB$ are $\{0,\ldots,d\}$-cross intersecting, but $\Mat(\CA,\CB)$ cannot contain any monochromatic rectangles of density greater than $2^{-d}$. This implies that in \cref{conj:lrc-for-set-systems} (and by extension \cref{conj:constant-sized-parallel-partitions}), we cannot hope to significantly improve the dependence on $k$ while maintaining $\polylog(d)$ in the exponent; in particular, \cref{conj:lrc-for-set-systems} cannot hold for $k=O(\log(|\CA||\CB|))$.

\ifnum\acmversion=1
\begin{acks}
Noah Singer was supported by the Herchel Smith Fellowship from Harvard College. Madhu Sudan was supported in part by a Simons Investigator Award and NSF Award CCF 1715187.

\else
\section*{Acknowledgements}
We would like to thank Sasha Golovnev, Raghu Meka and Santhoshini Velusamy for permission to describe their work \cite{GMSV19} here. We would also like to thank Shachar Lovett for his comments~\cite{Lov21} on the earlier version of this paper~\cite{SS21-early-version} and for his counterexample to our original Conjecture 5, as well as D\"om\"otor \Palvolgyi~\cite{Pal21} and Jacob Fox and Yuval Wigderson~\cite{FW21} for their counterexamples to the same.
We thank D\"om\"otor \Palvolgyi\ for his generous permission to build on his example in \cref{sec:set-system-upper-bound}. Finally, we acknowledge helpful comments from anonymous reviewers which significantly improved the exposition of this paper.
\fi
\ifnum\acmversion=1
\end{acks}
\fi

\appendix
\section{Proofs of claims from Apfelbaum and Sharir~\cite{AS07}}\label{app:AS07-lower-bound}

In this appendix, for completeness, we include proofs due to Apfelbaum and Sharir~\cite{AS07} which we used in the proof of \cref{thm:lattice-upper-bound}). \cref{claim:incidence-bound} has a short proof:

\begin{proof}[Proof of \cref{claim:incidence-bound}]
Consider any fixed hyperplane $h \in \CH$. Since $a_d = -1$, for any values $x_1,\ldots,x_{d-1} \in \{-1,1\}$, there is a unique value $x_d \in \{-(d-1), \ldots, d-1\}$ such that $\sum_{i=1}^d a_i x_i = 0$, i.e., $x_d = \sum_{i=1}^{d-1} a_i x_i$. Thus, $2^{d-1}$ points in $\CU$ lie on $h$. Since $|\CH|=2^{d-1}$, $\I(\CU,\CH) = 2^{2d-2}$.
\end{proof}

We need a bit more setup to prove \cref{claim:point-bound} and \cref{claim:hyper-bound}. We begin with the following helpful proposition:

\begin{proposition}\label{prop:lattice-int}
	For $\ell > 0$, let $f \subset \BR^\ell$ be a $j$-flat. Then $f$ intersects $\{-1,1\}^\ell$ in at most $2^j$ points.
\end{proposition}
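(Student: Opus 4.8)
The plan is to prove \cref{prop:lattice-int} by induction on the codimension $\ell - j$, reducing the ambient dimension by one at each step via intersection with a coordinate hyperplane. First I would handle the base case $j = \ell$: here $f = \BR^\ell$ and the claim is the trivial bound $|\{-1,1\}^\ell| = 2^\ell$. For the inductive step, suppose $j < \ell$ and the claim holds for all flats of dimension $j$ in $\BR^{\ell'}$ with $\ell' < \ell$ (equivalently, a suitable induction on $\ell$); I want to show a $j$-flat $f \subset \BR^\ell$ meets $\{-1,1\}^\ell$ in at most $2^j$ points.

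The key idea is to project away one coordinate. Since $j < \ell$, the flat $f$ is a proper affine subspace, so there is some coordinate direction $e_k$ along which $f$ does not extend to all of $\BR$ — more precisely, there is a coordinate $k$ such that $f$ is not ``full'' in the $x_k$-direction, meaning the $x_k$-coordinate is not a free parameter on $f$. (If every coordinate were free, $f$ would be all of $\BR^\ell$.) Formally: pick a coordinate $k$ such that the projection of $f$ onto the $x_k$-axis is a proper subset of $\BR$; since $f$ is affine and proper, $x_k$ restricted to $f$ is a nonconstant-or-constant affine function, and in the nonconstant case each value of $x_k$ (in particular each of $x_k = 1$ and $x_k = -1$) is attained on a $(j-1)$-flat, while in the constant case at most one of $x_k = \pm 1$ is attained and there on all of $f$, a $j$-flat but now living effectively in $\BR^{\ell-1}$. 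The clean way to unify this: partition the points of $f \cap \{-1,1\}^\ell$ according to whether $x_k = 1$ or $x_k = -1$. Each part lies in a flat $f \cap \{x_k = c\}$ which, by \cref{prop:decrease-dim}, is either empty, or a $(j-1)$-flat, or (if $f \subseteq \{x_k=c\}$) equal to $f$ itself — but this last case can occur for at most one value of $c$. So the count splits as (at most $2^{j-1}$) $+$ (at most $2^{j-1}$) when neither part is all of $f$, giving $2^j$; and when one part is all of $f$, that part sits inside the hyperplane $\{x_k = c\} \cong \BR^{\ell-1}$ as a $j$-flat, so by induction on $\ell$ it contributes at most $2^j$ while the other part is empty.

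The main obstacle — really just a bookkeeping point rather than a genuine difficulty — is making sure the case analysis is airtight: in particular that we correctly identify a coordinate $k$ for which the split is nontrivial (i.e.\ $f \not\subseteq \{x_k = 1\}$ and $f \not\subseteq \{x_k = -1\}$ simultaneously fails only in the degenerate situation we handle separately), and that \cref{prop:decrease-dim} is being applied in the right ambient space. I would phrase the induction on $\ell$ (with $j$ allowed to vary), so that the degenerate ``$f$ lies in a coordinate hyperplane'' case feeds directly back into the inductive hypothesis one dimension down. Once \cref{prop:lattice-int} is established, \cref{claim:point-bound} and \cref{claim:hyper-bound} should follow by applying it in the appropriate coordinate subspaces of the lattice $\CU$ defining the Apfelbaum--Sharir construction.
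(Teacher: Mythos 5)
Your final argument is the paper's proof: induction on the ambient dimension $\ell$, splitting $f \cap \{-1,1\}^\ell$ by the coordinate hyperplanes $\{x_k = \pm 1\}$, invoking \cref{prop:decrease-dim} for the generic case to get two $(j-1)$-flats in $\BR^{\ell-1}$, and folding the degenerate case $f \subseteq \{x_k = c\}$ back into the inductive hypothesis at $\ell - 1$. Two expository corrections worth making: the aside claiming some coordinate must fail to be ``free'' on a proper flat is false (the diagonal line $\{(t,t) : t \in \BR\} \subset \BR^2$ surjects onto both axes), but your argument never uses it and works for any fixed $k$, just as the paper takes $k=1$; and the opening framing as ``induction on codimension $\ell - j$'' cannot drive the recursion, since restricting to a coordinate hyperplane drops $j$ and $\ell$ together leaving the codimension unchanged, so the induction is genuinely on $\ell$ with $j$ varying, exactly as you ultimately phrase it.
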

\begin{proof}[Proof]
    We prove by induction on the dimension $\ell$. In the base case $\ell=1$, $f$ is the line $\BR$, and it intersects $\{-1,1\}$ in $2$ points.
    
    For general $\ell$, let $h_1$ and $h_{-1}$ be the hyperplanes defined by the equations $x_1 = 1$ and $x_1=-1$, respectively. If $f$ is contained within either hyperplane, we restrict to that hyperplane and apply the inductive hypothesis. Otherwise, by \cref{prop:decrease-dim}, $f \cap h_1$ and $f \cap h_{-1}$ are both $(j-1)$-flats or empty, so by the inductive hypothesis, $f$ intersects each of the subcubes $\{-1\} \times \{-1,1\}^{\ell-1}$ and $\{1\} \times \{-1,1\}^{\ell-1}$ in at most $2^{j-1}$ points, and hence it intersects the entire hypercube $\{-1,1\}^\ell$ in at most $2^j$ points.
\end{proof}

This lets us prove the two remaining claims.

\begin{proof}[Proof of \cref{claim:point-bound}]
	Consider any hyperplane $h \in \CH$ containing $f$. Let $h$ be defined by the equation $\sum_{i=1}^k a_i x_i = 0$, and consider the linear map \[ \phi_h : \BR^{d-1} \to h : (x_1,\ldots,x_{d-1}) \mapsto \left(x_1,\ldots,x_{d-1},\sum_{i=1}^{d-1} a_ix_i\right). \] $\phi_h$ is an isomorphism which, restricted to the hypercube $\{-1,1\}^{d-1}$, gives a bijection with the points of $\CU$ which lie on $h$.
	
	Since $f$ is contained in $h$, its preimage $\phi_h^{-1}(f)$ is a $j$-flat in $\BR^{d-1}$. By \cref{prop:lattice-int}, $\phi^{-1}_h(f)$ intersects $\{-1,1\}^{d-1}$ in at most $2^j$ points. Since $\phi_h$ restricts to a bijection between $\{-1,1\}^{d-1}$ and $\CU \cap h$, and $f$ is contained in $h$, we can conclude that $f$ intersects $\CU$ in at most $2^j$ points, as desired.
\end{proof}

\begin{proof}[Proof of \cref{claim:hyper-bound}]
	Consider any hyperplane $h \in \CH$ containing $f$. Let $h$ be defined by the equation $\langle a,x\rangle = 0$ (using inner product notation). View $f$ as the image of an affine injection $\BR^j \to \BR^d$ given by $x \mapsto Mx + v$, where $M \in \BR^{d \times j}$ has full rank and $v \in \BR^d$.
	
	Since $h$ contains $f$, for any $y \in \BR^j$, $\langle a, My + v\rangle = 0$. Hence $\langle a, v\rangle = 0$ (plugging in $y = 0$), so $\langle a, My\rangle = 0$ for all $y \in \BR^j$ (subtracting). Now $a^\top M$ is simply a vector in $\BR^j$; if its inner product with all $y \in \BR^j$ is zero, then it is zero. Thus, $a \in \ker(M^\top)$.
	
	Let $K := \operatorname{ker}(M^\top)$. By rank-nullity, and since row-rank equals column-rank, \[ \dim(K) = d - \dim(\operatorname{im} (M^\top)) = d - \dim(\operatorname{im}(M)) = d - j. \]
	
	Now consider the hyperplane $h'$ defined by the equation $x_d = -1$. We have $a \in K \cap h' \cap \{-1,1\}^d$. But $K$ is not contained in $h'$, since $K$ contains the origin while $h'$ does not. Hence $K \cap h'$ is a $(d-j-1)$-flat by \cref{prop:decrease-dim}, so by \cref{prop:lattice-int}, it can intersect $\{-1,1\}^d$ in at most $2^{d-j-1}$ points, upper-bounding the number of possible $a$'s.
\end{proof}

\ifnum\acmversion=1
\bibliographystyle{ACM-Reference-Format}
\bibliography{logrank}
\else
\printbibliography
\fi

\end{document}